\newtheorem{teo}{Theorem}[section]
\newtheorem{lema}[teo]{Lemma}
\newtheorem{prop}[teo]{Proposition}
\newtheorem{coro}[teo]{Corollary}
\newtheorem{fato}[teo]{Fact}
\newtheorem{remark}[teo]{Remark}
\theoremstyle{definition}
\newtheorem{defi}[teo]{Definition}
\numberwithin{equation}{section}
\begin{document}

\newcommand{\cc}{\mathfrak{c}}
\newcommand{\N}{\mathbb{N}}
\newcommand{\PP}{\mathbb{P}}
\newcommand{\forces}{\Vdash}

\title{Thin-very tall compact scattered spaces which are hereditarily separable}
\author{Christina Brech}
\thanks{The research has been a part of Thematic Project FAPESP (2006/02378-7). 
The first author was supported by scholarships from CAPES 
(3804/05-4) and CNPq (140426/2004-3 and 202532/2006-2). She would like to thank Stevo Todorcevic and
the second author, her Ph.D. advisors at the University of S\~ao
Paulo and at the University of Paris 7, under whose supervision the results of this paper were obtained.} 
\address{IMECC-UNICAMP, Caixa Postal 6065, 13083-970, Campinas, SP, Brazil}
\email{christina.brech@gmail.com}

\author{Piotr Koszmider}
\thanks{The second author was partially supported by Polish Ministry of Science and
 Higher Education research grant N N201 386234.}
\address{Instytut Matematyki; Politechnika {\L}\'odzka, ul. W\'olcza\'nska 215; 90-924  {\L}\'od\'z, Poland}
\email{pkoszmider.politechnika@gmail.com}
%
\subjclass{Primary 54G12; Secondary 03E35, 46B26}
%
%
%
\begin{abstract}
We strengthen the property $\Delta$ of a function $f:[\omega_2]^2\rightarrow [\omega_2]^{\leq \omega}$
considered by Baumgartner and Shelah. This allows us to consider new types of amalgamations
in the forcing used by Rabus, Juh\'asz and Soukup to construct thin-very tall compact scattered spaces.
We consistently obtain  spaces $K$ as above where $K^n$ is hereditarily separable for each $n\in\N$.
This serves as a counterexample concerning cardinal functions on compact spaces as well
as having some applications in Banach spaces: the Banach space $C(K)$ is an Asplund
space of density $\aleph_2$ which has no Fr\'echet smooth renorming, nor an
uncountable biorthogonal system.
\end{abstract}

\maketitle

\markright{THIN-VERY TALL COMPACT SCATTERED SPACES WHICH ARE HS}

\section{Introduction}

Given a compact scattered space $K$, we call the derivative of $K$ (denoted by
$K'$) the subset of $K$ formed by its accumulation points and  we inductively
define $K^{(\alpha)} = (K^{(\beta)})'$ if $\alpha = \beta +1$ and
$K^{(\alpha)} = \bigcap_{\beta<\alpha} K^{(\beta)}$ if $\alpha$ is a limit
ordinal. The height of $K$, $ht(K)$, is the smallest ordinal $\alpha$ such
that $K^{(\alpha)}$ is finite and nonempty, and the width of $K$, $wd(K)$, is
the supremum of the cardinalities $|K^{(\alpha)} \setminus K^{(\alpha+1)}|$
for $\alpha < ht(K)$. We call $K = \bigcup_{\alpha<ht(K)} K^{(\alpha)} 
\setminus K^{(\alpha+1)}$ the Cantor-Bendixson decomposition of $K$ and  $K^{(\alpha)} 
\setminus K^{(\alpha+1)}$ its $\alpha^{th}$ Cantor-Bendixson level.

The purpose of this work is to show that the existence of compact
hereditarily separable scattered spaces of height $\omega_2$ is consistent
with the usual axioms of set theory. For a given ordinal $\theta$ let us consider the
following notation:
\begin{itemize}
\item A cw($\theta$) space is a compact scattered space of countable width
   and height equal to $\theta$.
\item A hs($\theta$) space is a compact scattered space which is hereditarily
  separable and of height equal to $\theta$.
\end{itemize}
cw($\omega_1$) spaces are usually called thin-tall spaces and cw($\omega_2$) spaces are the thin-very tall spaces.
First we remark that any hs($\theta$) space is a cw($\theta$) space as the
Cantor-Bendixson levels form discrete subspaces. Whether
there is or not in ZFC a cw($\omega_1$) space was a
question posed by Telg\'arsky in 1968 (unpublished) and first (consistently) 
answered by Ostaszewski \cite{Ostaszewski}, using $\diamondsuit$. Rajagopalan 
constructed the first ZFC example of a cw($\omega_1$) in \cite{Rajagopalan}.
 Further, Juh\'asz and Weiss generalized these results (and simplified their proofs) in \cite{JuhaszWeiss}
proving in ZFC that for any ordinal $\theta<\omega_2$, there is a cw($\theta$)
space. 

For higher $\theta$'s the situation changes: in any  model of CH there are
no cw($\omega_2$) spaces and Just proved in \cite{Just} that neither are there such spaces in the Cohen
model (where $\neg$ CH holds). On the other
hand, Baumgartner and Shelah \cite{BaumgartnerShelah} constructed by forcing
the first consistent example of a cw($\omega_2$) space. An interesting point of
this forcing construction was the use of a new combinatorial device called
a function with the property $\Delta$.

The main purpose of this work is to prove the consistency of the existence of
a hs($\omega_2$) space. In fact, our space has even stronger properties: 
each of its finite powers is hereditarily separable. Whether
consistently there are hs($\omega_3$) or even cw($\omega_3$) spaces remains a well-known open question. On the other hand,
Martínez in \cite{MartinezIJM} adopted the method of \cite{BaumgartnerShelah} 
to obtain the consistency of the existence of cw($\theta$) spaces
for each $\theta<\omega_3$.

It follows from an old result of Shapirovski\u{\i} \cite{Shapirovskii} that for
any compact space $K$, $hd(K) \leq hL(K)^+$. Our construction shows that the dual
inequality does not follow from ZFC, since for our compact space $K$, we have
that $hL(K) = \aleph_2 \not\leq \aleph_1 = hd(K)^+$. Nevertheless, the dual 
inequality holds under GCH for regular spaces: since the weight $w(K)$ of a regular 
space $K$ is less or equal to $2^{d(K)}$ (see, for example, \cite{HodelHBST}), we 
trivially conclude that $hL(K) \leq 
w(K) \leq 2^{d(K)} = d(K)^+ \leq hd(K)^+$ under GCH. 

Turning to properties of Banach spaces, let us first recall some definitions
and results: a Banach space $X$ is an Asplund space if every continuous and
convex real-valued function on $X$ is Fr\'echet smooth at all points of
a G$_\delta$ dense subset of $X$. For separable Banach spaces, this is
equivalent to admitting a Fr\'echet smooth renorming (see \cite{DevilleGodefroyZizler}).
Namioka and Phelps proved in \cite{NamiokaPhelps} that $C(K)$ is
Asplund if and only if $K$ is scattered. Thus, our $C(K)$ is an Asplund
space. 

Haydon constructed in \cite{HaydonCSQ} the first nonseparable Asplund
space $C(K)$ which does not admit a Fr\'echet smooth renorming, concluding that the situation
changes for nonseparable Asplund spaces. Later, Jim\'enez Sevilla and Moreno
analyzed in \cite{JimenezMoreno} the structural properties of the space
$C(K)$, where $K$ is the well-known Kunen line constructed under CH (see
\cite{NegrepontisHBST}). 
They showed, for the Kunen line $K$, that $C(K)$ is also a nonseparable Asplund space
with no Fr\'echet smooth renorming. 

The weight of our space $K$ is $\aleph_2$, so that $C(K)$ is an Asplund space
of density $\aleph_2$. The fact that  $K$ is compact scattered and every
finite power of $K$ is hereditarily separable implies, in the same way
as for the Kunen line, that $C(K)$ does not
admit any Fr\'echet smooth renorming, but as in the case of the Kunen line we do not know if it
admits a G\^ateaux smooth renorming, or a Fr\'echet smooth bump
function.

A biorthogonal system on a Banach space $X$ is a family
$(x_\alpha, \varphi_\alpha)_{\alpha < \kappa} \subseteq X \times X^*$ such that
$\varphi_\alpha(x_\beta) = \delta_{\alpha, \beta}$ and a semi-biorthogonal
system on a Banach space $X$ is a sequence $(x_\alpha, \varphi_\alpha)_{\alpha
  < \kappa} \subseteq X \times X^*$ such that $\varphi_\alpha(x_\beta) = 1$,
if $\alpha=\beta$, $\varphi_\alpha(x_\beta)= 0$, if $\alpha<\beta$ and
$\varphi_\alpha(x_\beta)\geq 0$ if $\beta < \alpha$. Todorcevic showed in
\cite{TodorcevicBiorthogonal} (Theorem 9 together with the results of \cite{BorweinVanderwerff}) 
the existence of uncountable
semi-biorthogonal systems in Banach spaces $C(K)$ of density strictly greater
than $\aleph_1$. On the other hand, the fact that our space $K$ is compact scattered and every
finite power of $K$ is hereditarily separable implies, in the same way
as for the Kunen line, that $C(K)$ does not admit an
uncountable biorthogonal system. It follows that Todorcevic's
result cannot be improved in ZFC by replacing the existence of uncountable
semi-biorthogonal systems by the existence of uncountable biorthogonal systems
in spaces $C(K)$ of large density. On the other hand it is proved in
\cite{TodorcevicBiorthogonal} that it is consistent that every nonseparable Banach space
has an uncountable  biorthogonal system, showing that the existence
of a Banach space like ours or Kunen's 
cannot be proved in ZFC.

Our construction is based on the Juh\'asz and Soukup \cite{JuhaszSoukup} interpretation  
of Rabus' work \cite{Rabus}, where he
modified the Baumgartner-Shelah forcing from \cite{BaumgartnerShelah} to
obtain a countably tight space which is initially $\omega_1$-compact and
noncompact, answering a question of Dow and van Douwen. 

This paper is organized as follows: we finish this section by reviewing the
method of Juh\'asz and Soukup and some related
results and definitions which we will need afterwards. In Section 2 we prove the
key lemma which enables us to prove our main result in a straightforward
way. This lemma introduces a new way of  amalgamating conditions in
forcings which add thin-very tall spaces. One can apply these amalgamations
in the generic construction if one strengthens the property $\Delta$
of a function involved in the forcing.
In Section 3, we introduce the strong property $\Delta$ and assuming the existence of a function which satisfies it,
we prove the main results and analyze their consequences in topological and functional analytic
terms. Section 4 is devoted to establishing the consistency of the
existence of a function with the strong property $\Delta$.
Section 4 is due to the second author and the remaining sections to
the first author.

The notation and terminology used are those of \cite{JuhaszSoukup}. Given a set $X$, $\wp(X)$ is the power set of $X$ and, given a cardinal $\kappa$, $[X]^\kappa$ (resp. $[X]^{\leq\kappa}$ and $[X]^{<\kappa}$) denotes the family of subsets of $X$ of cardinality equal to $\kappa$ (resp. less or equal to $\kappa$ and less than $\kappa$).

Let us start by recalling the definition of the property $\Delta$:

\begin{defi}[Baumgartner, Shelah, \cite{BaumgartnerShelah}, p.122]
A function $f:[\omega_2]^2\rightarrow[\omega_2]^{\leq\omega}$ has the property
 $\Delta$ if $f(\{\xi, \eta\}) \subseteq \min\{\xi, \eta\}$ for all $\{\xi, \eta\}
 \in [\omega_2]^2$ and for any uncountable family $\mathcal{A}$ of finite subsets
of $\omega_2$, there are distinct $a,b\in\mathcal{A}$ such that for any $\tau\in a\cap b$, any $\xi\in a\setminus b$ and any $\eta\in b\setminus a$ we have:
\begin{enumerate}[1)]
\item $a \cap b \cap \min\{\xi,\eta\}\subseteq f(\{\xi,\eta\})$;
\item $\tau<\xi\ \Rightarrow f(\{\tau,\eta\})\subseteq f(\{\xi,\eta\})$;
\item $\tau<\eta\ \Rightarrow f(\{\tau,\xi\})\subseteq f(\{\xi,\eta\})$.
\end{enumerate}
\end{defi}

Now, we fix a function $f: [\omega_2]^2 \rightarrow [\omega_2]^{\leq \omega}$ with the property $\Delta$.

\begin{defi}[Juh\'asz, Soukup \cite{JuhaszSoukup}, Definition 2.1]\label{defiForcing}
Let $\PP_f$ be the forcing formed by conditions $p=(D_p, h_p, i_p)$ where:
        \begin{enumerate}[1.] 
                \item $D_p \in [\omega_2]^{< \omega}$;
                \item $h_p: D_p \rightarrow \wp(D_p)$ and for all $\xi \in
                D_p$, $\max h_p(\xi) = \xi$;
                \item $i_p: [D_p]^2 \rightarrow [D_p]^{< \omega}$ and for
                all $\xi,\eta \in D_p$, $\xi < \eta$, we have that:
                        \subitem{(a)} if $\xi \in h_p(\eta)$, then
                $h_p(\xi)\setminus h_p(\eta) \subseteq \bigcup_{\gamma \in
                i_p(\{\xi,\eta\})} h_p(\gamma)$,
                        \subitem{(b)} if $\xi \notin h_p(\eta)$, then
                $h_p(\xi) \cap h_p(\eta) \subseteq \bigcup_{\gamma \in
                i_p(\{\xi, \eta\})} h_p(\gamma)$,
                        \subitem{(c)} $i_p(\{\xi, \eta\}) \subseteq
                f(\{\xi,\eta\})$;
        \end{enumerate}
ordered by $p \leq q$ if $D_p \supseteq D_q$, for all $\xi \in D_q$, $h_p(\xi)
\cap D_q = h_q(\xi)$ and $i_p|_{[D_q]^2}= i_q$.
\end{defi}

To simplify notation, it is convenient to define the following:

\begin{defi}[Juh\'asz, Soukup \cite{JuhaszSoukup}]
Given finite nonempty sets of ordinals $x$ and $y$ such that $\max x < \max y$, we define
$$x*y = \left\{
                     \begin{array}{ll}
                      x \setminus y & \text{if }\max x \in y ,\\
                      x \cap y & \text{if } \max x \notin y.
                     \end{array} \right.$$
\end{defi}

We now rewrite conditions 3.(a) and (b) of the definition of the forcing as
$$h_p(\xi)* h_p(\eta) \subseteq \bigcup_{\gamma \in i_p(\{\xi,\eta\})} h_p(\gamma).$$

To define the space $K_f$, fix the ground model $V$ and a generic filter $G$.

\begin{defi}[Juh\'asz, Soukup \cite{JuhaszSoukup}, Definition 2.3]\label{defiEspacoGeral}
For each $\xi <\eta < \omega_2$, working in $V^{\PP_f}$, let
        $$h(\xi) = \bigcup_{p \in G} h_p(\xi) \quad \text{ and } \quad
        i(\{\xi,\eta\}) = \bigcup_{p \in G} i_p(\{\xi,\eta\}),$$ 
and let $L_f$ be the topological space $(\omega_2, \tau)$, where $\tau$ is the
        topology on $\omega_2$ which has the family of sets
        $$\{h(\xi): \xi < \omega_2\} \cup \{\omega_2 \setminus h(\xi): \xi <
        \omega_2\}$$
as a topological subbasis. We call $h(\xi)$ the generic neighborhood of $\xi$.
\end{defi}

From Theorem 1.5 of \cite{JuhaszSoukup}, it follows that for all $\xi<
\omega_2$, $h(\xi)$ is a compact subspace of $(\omega_2,\tau)$ and it easy to check that
        $$\{h(\xi)\setminus \bigcup_{\eta \in F} h(\eta) : F \in [\xi]^{<\omega}\} \text{ forms a local topological basis at $\xi$.} \eqno{(+)}$$
Therefore $L_f$ is a locally compact
scattered zero-dimensional space.

We are now ready to define $K_f$:

\begin{defi}\label{defiEspacoGenerico}
In $V^{\PP_f}$, $K_f$ is the one-point compactification of $L_f$. The 
point of compactification is denoted $*$, thus $K_f \setminus L_f=\{*\}$.
\end{defi}

In particular, we use the following results.

\begin{teo}[Rabus \cite{Rabus}, Lemma 4.1; Juh\'asz, Soukup
  \cite{JuhaszSoukup}, Lemma 2.8]\label{teoForcingCCC}
$\PP_f$ satisfies c.c.c.
\end{teo}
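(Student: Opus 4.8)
The plan is to show that $\PP_f$ has the countable chain condition by taking an arbitrary uncountable collection of conditions, extracting an uncountable subcollection with strong uniformities via a $\Delta$-system argument, and then using the property $\Delta$ of $f$ to find two conditions that are compatible. First I would fix an uncountable family $\{p_\alpha : \alpha<\omega_1\}$ of conditions and regularize it. Writing $a_\alpha = D_{p_\alpha}$, I would thin out to an uncountable subfamily so that the $a_\alpha$ form a $\Delta$-system with root $r$; so that all the $a_\alpha$ have the same size $n$; so that the increasing enumerations $a_\alpha = \{\xi^\alpha_0 < \cdots < \xi^\alpha_{n-1}\}$ are order-isomorphic in the sense that corresponding coordinates either all lie in $r$ or all lie strictly above $\sup r$; and so that the conditions are \emph{isomorphic} on their domains: for any $\alpha,\beta$ the natural bijection $a_\alpha \to a_\beta$ fixing $r$ and matching the enumerations carries $h_{p_\alpha}$ to $h_{p_\beta}$ and $i_{p_\alpha}$ to $i_{p_\beta}$ (this uses only that $\wp(a_\alpha)$ and $[a_\alpha]^{<\omega}$ are finite, so there are only countably many isomorphism types, and that the traces $h_{p_\alpha}\restriction r$, $i_{p_\alpha}\restriction [r]^2$ can be made constant). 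This is the routine pigeonhole/counting part.

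Next I would invoke the property $\Delta$ applied to the uncountable family $\mathcal{A} = \{a_\alpha : \alpha<\omega_1\}$ to obtain two distinct sets $a = a_\alpha$ and $b = a_\beta$ satisfying clauses 1), 2), 3) of the definition. The goal is then to amalgamate $p_\alpha$ and $p_\beta$ into a common extension $q$. I would set $D_q = a\cup b$, and define $h_q$ and $i_q$ by taking, for each point of $a$ its $h_{p_\alpha}$-value and for each point of $b$ its $h_{p_\beta}$-value (these agree on the root $r$ by the constancy arranged above), and similarly $i_q$ on pairs inside $a$ and inside $b$; for a genuinely mixed pair $\{\xi,\eta\}$ with $\xi \in a\setminus b$ and $\eta \in b\setminus a$ (or vice versa) I would be forced to \emph{define} $i_q(\{\xi,\eta\})$, and the natural choice is something like $i_q(\{\xi,\eta\}) = f(\{\xi,\eta\}) \cap D_q$ or a suitable subset thereof, chosen so as to satisfy clause 3.(c).

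The hard part will be verifying condition 3 of Definition \ref{defiForcing} for the mixed pairs, namely that $h_q(\xi) * h_q(\eta) \subseteq \bigcup_{\gamma\in i_q(\{\xi,\eta\})} h_q(\gamma)$ when $\xi$ and $\eta$ come from different conditions. Here the three clauses of property $\Delta$ do exactly the work one needs: clause 1) guarantees that the part of $h_q(\xi)*h_q(\eta)$ lying in the root $r$ below $\min\{\xi,\eta\}$ is captured because $a\cap b\cap\min\{\xi,\eta\}\subseteq f(\{\xi,\eta\})$, so those points can be thrown into $i_q(\{\xi,\eta\})$; and clauses 2) and 3) provide the monotonicity $f(\{\tau,\eta\})\subseteq f(\{\xi,\eta\})$ (resp.\ $f(\{\tau,\xi\})\subseteq f(\{\xi,\eta\})$) that lets one transfer the covering relations already present in $p_\alpha$ or $p_\beta$ between a root point $\tau$ and $\xi$ or $\eta$ up into the covering relation for the mixed pair. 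I would check the two cases $\xi\in h_q(\eta)$ and $\xi\notin h_q(\eta)$ separately, using the $*$ notation to handle both the set-difference and intersection cases uniformly, and confirm that $q\le p_\alpha$ and $q\le p_\beta$ by construction, so that $p_\alpha$ and $p_\beta$ are compatible. Since every uncountable family thus contains two compatible conditions, $\PP_f$ is c.c.c. I expect the bookkeeping in matching the isomorphism types to the $\Delta$-system root, together with the careful case analysis for mixed pairs, to be the most delicate part of the argument.
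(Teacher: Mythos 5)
Your outline (thinning to a $\Delta$-system of pairwise isomorphic conditions, invoking property $\Delta$, then amalgamating) is the strategy of the proof this paper cites from Rabus and Juh\'asz--Soukup, but your amalgamation is not a condition, and the failure is not where you look for it. The root error is the claim that the traces $h_{p_\alpha}$ on the root $r$ ``can be made constant'': if $h_{p_\alpha}(\xi)=h_{p_\beta}(\xi)$ for $\xi\in r$ and $\alpha\neq\beta$, that common set lies in $D_{p_\alpha}\cap D_{p_\beta}=r$, and nothing prevents $h_{p_\alpha}(\xi)$ from meeting $D_{p_\alpha}\setminus r$; so by thinning you can only arrange \emph{isomorphism} (Definition \ref{defiIsomorphicConditions}), not constancy. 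Consequently any common extension $q$ must have $h_q(\xi)\supseteq h_{p_\alpha}(\xi)\cup h_{p_\beta}(\xi)$ for root points $\xi$, which puts points of $D_{p_\beta}\setminus r$ into $h_q(\xi)$, while your definition keeps $h_q(\eta)=h_{p_\alpha}(\eta)$ for $\eta\in D_{p_\alpha}\setminus r$. This violates Definition \ref{defiForcing}.3.(a) for pairs \emph{inside} $D_{p_\alpha}$ --- a case you never check, since you verify only mixed pairs. Concretely, let $D_{p_\alpha}=\{\alpha,\ \omega_1,\ \omega_1+1+\alpha\}$ with $h_{p_\alpha}(\alpha)=\{\alpha\}$, $h_{p_\alpha}(\omega_1)=\{\alpha,\omega_1\}$, $h_{p_\alpha}(\omega_1+1+\alpha)=D_{p_\alpha}$ and $i_{p_\alpha}\equiv\emptyset$; these are conditions for any $f$, pairwise isomorphic with root $\{\omega_1\}$. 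For any $\alpha\neq\beta$ your $q$ gives $\beta\in h_q(\omega_1)\setminus h_q(\omega_1+1+\alpha)$, while $q\leq p_\alpha$ forces $i_q(\{\omega_1,\omega_1+1+\alpha\})=i_{p_\alpha}(\{\omega_1,\omega_1+1+\alpha\})=\emptyset$, so 3.(a) fails; and no choice of $i_q$ on mixed pairs (the only freedom you left yourself) can repair a pair lying in $[D_{p_\alpha}]^2$. Property $\Delta$ cannot rescue this either: for this family the defect appears for \emph{every} pair $\alpha\neq\beta$.

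The missing idea is exactly what Definition \ref{defiDelta} and Lemmas \ref{lemaJuhasz}, \ref{lemaFuncaoG}, \ref{lemaAmalgamacaoNova} supply: in the amalgamation the neighborhoods of \emph{all} points, not just root points, must absorb points of the other condition, e.g.\ $h_q(\eta)=h_{p_\alpha}(\eta)\cup\delta_2^{-1}[h_{p_\alpha}(\eta)]$ for $\eta\in D_{p_\alpha}$, and symmetrically (via $\delta_1$, or via $e$ as in Lemma \ref{lemaAmalgamacaoNova}) for $D_{p_\beta}$. In the example above this puts $\beta$ into $h_q(\omega_1+1+\alpha)$, because $\delta_2(\beta)=\omega_1\in h_{p_\alpha}(\omega_1+1+\alpha)$, and Lemma \ref{lemaFuncaoG} then disposes of all within-condition pairs in general. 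Note also that once the amalgamation is corrected, mixed pairs are no longer handled by clause 1) of property $\Delta$ alone: in your (too coarse) construction a mixed pair only ever meets inside the root, which is why clause 1) seemed to do all the work; in the correct construction $h_q(\xi)*h_q(\eta)$ contains points outside the root, and covering them requires transferring a cover indexed by $i_{p_\beta}(\{\xi,\theta\})$, $\theta$ a root point, into $i_q(\{\xi,\eta\})$ via $f(\{\xi,\theta\})\subseteq f(\{\xi,\eta\})$ --- this is where clauses 2) and 3) enter, exactly as in Subcase 4.2.2 of the proof of Lemma \ref{lemaAmalgamacaoNova}. Finally, be aware that this paper does not reprove the theorem (it cites it); its Section 2 develops an asymmetric variant of the needed amalgamation, which is the correct model to imitate.
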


\begin{prop}\label{propLocalmenteCompacto}
$V^{\PP_f}$ satisfies ``$K_f$ is a compact scattered zero-dimensional space''.
\end{prop}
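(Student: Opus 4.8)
The plan is to verify the three properties—compactness, scatteredness, and zero-dimensionality—for $K_f$, the one-point compactification of $L_f$, leaning on the facts already established in the excerpt about $L_f$. The crucial observation is that $L_f$ is already a locally compact, scattered, zero-dimensional space (stated right before Definition~\ref{defiEspacoGenerico}), so the bulk of the work is to show that one-point compactification preserves scatteredness and zero-dimensionality, while compactness is automatic by construction.

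**First I would** address compactness: $K_f$ is the one-point compactification of the locally compact Hausdorff space $L_f$, so it is compact by definition, and I would record that $L_f$ being Hausdorff (which follows from the subbasis in Definition~\ref{defiEspacoGeral} separating points, since distinct $\xi,\eta$ are separated by some $h(\zeta)$) guarantees $K_f$ is Hausdorff as well.

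\textbf{Scatteredness.} Next I would show every nonempty closed subset $F \subseteq K_f$ has an isolated point. If $F \subseteq L_f$, then since $L_f$ is scattered and $F$ is closed in $L_f$ (being closed in $K_f$ and contained in $L_f$), $F$ has an isolated point there, and this point remains isolated in $F$ as a subspace of $K_f$. If $* \in F$, consider $F \setminus \{*\} = F \cap L_f$; if this is empty then $\{*\}=F$ is trivially scattered, and otherwise $F \cap L_f$ is a nonempty subset of the scattered space $L_f$, hence has an isolated point $\xi$ witnessed by some basic open $U = h(\xi) \setminus \bigcup_{\eta \in E} h(\eta)$ from $(+)$; since $U$ is a compact (hence closed) subset of $L_f$ not containing $*$, the set $U$ is open in $K_f$ as well, so $\xi$ is isolated in $F$. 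Thus $F$ always has an isolated point, and $K_f$ is scattered.

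\textbf{Zero-dimensionality.} Finally I would exhibit a basis of clopen sets for $K_f$. Points of $L_f$ already have a neighborhood basis of sets of the form $U = h(\xi) \setminus \bigcup_{\eta \in E} h(\eta)$ by $(+)$; each such $U$ is compact in $L_f$, hence closed in $K_f$, and is open in $K_f$, so it is clopen. For the point $*$, a neighborhood basis is given by complements $K_f \setminus C$ where $C$ ranges over compact subsets of $L_f$; taking $C$ to be a finite union of the compact clopen sets $h(\xi)$ (equivalently the sets $U$ above), each such neighborhood of $*$ is open, and its complement $C$ is compact and clopen in $K_f$, so $K_f \setminus C$ is clopen. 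The main obstacle—and the only genuinely nontrivial point—is confirming that the sets $U$ from $(+)$ are genuinely \emph{compact} (so that they are closed in $K_f$ and miss $*$); this is precisely where I would invoke the cited consequence of Theorem~1.5 of \cite{JuhaszSoukup}, that each $h(\xi)$ is compact in $(\omega_2,\tau)$, together with the fact that a finite Boolean combination of compact open sets in a Hausdorff space is compact. Once compactness of these basic sets is in hand, both scatteredness and zero-dimensionality of $K_f$ follow cleanly.
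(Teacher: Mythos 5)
Your proposal is correct and follows exactly the route the paper intends: the paper states this proposition without proof, treating it as an immediate consequence of the preceding remarks that $L_f$ is locally compact, scattered and zero-dimensional (via Theorem~1.5 of \cite{JuhaszSoukup} and $(+)$) together with $K_f$ being its one-point compactification. Your write-up simply supplies the standard details (preservation of scatteredness and zero-dimensionality under one-point compactification, using that the basic sets from $(+)$ are compact open) that the paper leaves implicit.
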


\section{Amalgamating conditions}

In this section, we present the key lemma needed to prove our main result. Let us
start with some preliminaries and auxiliary lemmas.

\begin{defi}\label{defiIsomorphicConditions}
Let $p_1 = (D_1, h_1, i_1)$, $p_2 =(D_2, h_2, i_2) \in \PP_f$ be two
conditions. We say that $p_1$ and $p_2$ are isomorphic conditions if there is
an order-preserving bijective function $e: D_1 \rightarrow D_2$ satisfying the
following conditions:
\begin{enumerate}[(a)]
         \item if $\xi, \eta \in D_1$, then $\xi \in h_1(\eta)$ if and only if
         $e(\xi) \in h_2(e(\eta))$;
         
         \item if $\xi \in D_1 \cap D_2$, then $e(\xi)= \xi$.
         
\end{enumerate}
In this case, if the order-preserving bijection $e$ is such that $\xi \leq e(\xi)$ for every $\xi \in D_1$ we say that $p_1$ is lower than $p_2$. 
\end{defi}

For example we have the following:

\begin{lema}\label{lemaAboutBijection} Let $p_1 = (D_1, h_1, i_1)$, $p_2 =(D_2, h_2, i_2) \in \PP_f$ be two
isomorphic conditions and let $e: D_1 \rightarrow D_2$ be the order-preserving bijection. 
Then for every $\xi \in D_1 \cap D_2$,
\begin{enumerate}[(a)]
      \item  $(h_1(\xi) \cup h_2(\xi))\cap D_1 = h_1(\xi)$, 
        \item $(h_1(\xi) \cup h_2(\xi))\cap D_2 = h_2(\xi)$,
\item $h_1(\xi) = e^{-1}[h_2(\xi)]$.
\end{enumerate}
\end{lema}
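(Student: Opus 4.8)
The plan is to prove the third identity first, since it is essentially a restatement of the isomorphism property, and then to read off the first two from it. Throughout I will use only three facts: that $h_1(\xi)\subseteq D_1$ and $h_2(\xi)\subseteq D_2$ by clause 2 of Definition \ref{defiForcing}; that $e$ is the identity on $D_1\cap D_2$, which is clause (b) of Definition \ref{defiIsomorphicConditions}; and the isomorphism clause (a), namely that $\zeta\in h_1(\eta)\iff e(\zeta)\in h_2(e(\eta))$ for $\zeta,\eta\in D_1$. Since the fixed $\xi$ lies in $D_1\cap D_2$, the second fact gives $e(\xi)=\xi$ at the outset.

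For identity (c), I would simply instantiate the isomorphism clause (a) with second coordinate $\eta=\xi$: for every $\zeta\in D_1$ we get $\zeta\in h_1(\xi)\iff e(\zeta)\in h_2(e(\xi))=h_2(\xi)$, and the right-hand condition says precisely $\zeta\in e^{-1}[h_2(\xi)]$. As $\zeta$ ranges over $D_1=\mathrm{dom}(e)$ this yields $h_1(\xi)=e^{-1}[h_2(\xi)]$.

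For identity (a), the inclusion $h_1(\xi)\subseteq(h_1(\xi)\cup h_2(\xi))\cap D_1$ is immediate from $h_1(\xi)\subseteq D_1$, so it remains to show $h_2(\xi)\cap D_1\subseteq h_1(\xi)$. Here is the one point that needs attention: if $\zeta\in h_2(\xi)\cap D_1$, then $\zeta\in D_2$ as well because $h_2(\xi)\subseteq D_2$, so in fact $\zeta\in D_1\cap D_2$ and hence $e(\zeta)=\zeta$. Now $\zeta=e(\zeta)\in h_2(\xi)$, so $\zeta\in e^{-1}[h_2(\xi)]=h_1(\xi)$ by (c), as required. Identity (b) is proved symmetrically, exchanging the roles of the two conditions and using $h_1(\xi)\subseteq D_1$ in place of $h_2(\xi)\subseteq D_2$.

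There is no genuine obstacle in this lemma; the content is entirely bookkeeping about which ambient domain each element belongs to. The only step requiring a moment's care is the observation just made --- that an element of $h_2(\xi)\cap D_1$ automatically lands in $D_1\cap D_2$ and is therefore fixed by $e$ --- which is what allows the isomorphism property to be applied with $e$ acting trivially.
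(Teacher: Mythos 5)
Your proof is correct and is exactly the argument the paper has in mind: the paper's own proof is the one-liner ``Directly from Definition \ref{defiIsomorphicConditions}'', and your write-up is the straightforward unfolding of that, deriving (c) from clause (a) of the definition with $e(\xi)=\xi$, and then (a) and (b) from (c) together with the observation that any element of $h_2(\xi)\cap D_1$ (resp.\ $h_1(\xi)\cap D_2$) lies in $D_1\cap D_2$ and is fixed by $e$. Nothing further is needed.
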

\begin{proof} 
Directly from Definition \ref{defiIsomorphicConditions}.
\end{proof}

\begin{defi}[Juh\'asz, Soukup \cite{JuhaszSoukup}]\label{defiDelta}
Given $p_1 = (D_1, h_1, i_1), p_2 =(D_2, h_2, i_2) \in \PP_f$, define a mapping $\delta_2: dom(\delta_2) \rightarrow D_1 \cap D_2$, where
$$dom(\delta_2)=\{\eta \in D_2: \text{there is } \delta \in D_1\cap D_2 \text{ such that } \eta \in h_2(\delta)\}$$
and
$$\delta_2(\eta) = \min \{\delta \in D_1 \cap D_2: \eta \in h_2(\delta)\}.$$
\end{defi}

\begin{lema}\label{lemaDelta} Suppose that $p_1 = (D_1, h_1, i_1)$ and  $p_2 =(D_2, h_2, i_2) \in \PP_f$ are two
conditions.
Then, 
\begin{enumerate}[(a)]
\item for all $\eta \in dom(\delta_2)\setminus D_1$, we have that $\eta < \delta_2(\eta)$ and
\item for all $\eta\in D_1\cap D_2$ we have $\eta\in dom(\delta_2)$ and $\delta_2(\eta)=\eta$.
\end{enumerate}
\end{lema}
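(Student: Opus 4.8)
The plan is to unwind the definitions in Definition~\ref{defiDelta} and read off both parts directly from the structure of $h_2$ and the domain condition. The key observation is that $\delta_2(\eta)$ is defined as a minimum over the set $\{\delta \in D_1 \cap D_2 : \eta \in h_2(\delta)\}$, so to say anything about its value I first need this set to be nonempty, which is exactly the requirement that $\eta \in dom(\delta_2)$.

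Let me sketch part (b) first, since it is the cleaner of the two and will feed into part (a). Fix $\eta \in D_1 \cap D_2$. To show $\eta \in dom(\delta_2)$ I must exhibit some $\delta \in D_1 \cap D_2$ with $\eta \in h_2(\delta)$; the natural candidate is $\delta = \eta$ itself. Here I would invoke condition~2 of Definition~\ref{defiForcing}, which says $\max h_2(\eta) = \eta$, so in particular $\eta \in h_2(\eta)$. Since $\eta \in D_1 \cap D_2$, this witnesses $\eta \in dom(\delta_2)$. Then $\delta_2(\eta) = \min\{\delta \in D_1 \cap D_2 : \eta \in h_2(\delta)\}$, and I must check this minimum equals $\eta$. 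On one hand $\eta$ is in the set, so $\delta_2(\eta) \leq \eta$. On the other hand, any $\delta$ in the set satisfies $\eta \in h_2(\delta)$, and again by $\max h_2(\delta) = \delta$ we get $\eta \leq \delta$; hence every element of the set is $\geq \eta$, so the minimum is $\geq \eta$. Combining gives $\delta_2(\eta) = \eta$.

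For part (a), fix $\eta \in dom(\delta_2) \setminus D_1$ and set $\delta = \delta_2(\eta)$. By definition $\delta \in D_1 \cap D_2$ and $\eta \in h_2(\delta)$. Since $\max h_2(\delta) = \delta$ (condition~2 again), we have $\eta \leq \delta$. It remains to upgrade this to a strict inequality $\eta < \delta$. If instead $\eta = \delta$, then $\eta = \delta \in D_1 \cap D_2 \subseteq D_1$, contradicting the assumption $\eta \notin D_1$. Hence $\eta < \delta = \delta_2(\eta)$, as claimed.

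Both parts are essentially immediate consequences of the normalization $\max h_p(\xi) = \xi$ built into the forcing conditions, so there is no genuine obstacle here; the only thing to be careful about is the order of reasoning, namely establishing that the relevant sets are nonempty (via $\eta \in h_2(\eta)$) before taking minima, and keeping track of which inclusions are strict. I would present the whole thing as a short verification flagging the single use of condition~2 of Definition~\ref{defiForcing} at each step.
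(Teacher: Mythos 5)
Your proof is correct and is essentially the paper's proof: the paper simply states that the lemma follows ``directly from Definition~\ref{defiDelta},'' and your verification---using the normalization $\max h_p(\xi)=\xi$ from condition~2 of Definition~\ref{defiForcing} to get $\eta\in h_2(\eta)$ and $\eta\leq\delta$ whenever $\eta\in h_2(\delta)$---is exactly the intended unwinding of that definition.
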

\begin{proof}
Directly from Definition \ref{defiDelta}.
\end{proof}

We prove the next lemma, for the reader's convenience. 

\begin{lema}[Juh\'asz, Soukup \cite{JuhaszSoukup}]\label{lemaJuhasz}
Let $p_1 = (D_1, h_1, i_1)$, $p_2 =(D_2, h_2, i_2) \in \PP_f$ be two
isomorphic conditions.
If $\xi\in D_1\cap D_2$, then
        $$h_2(\xi)  = \delta_2^{-1}[h_1(\xi)] .$$
\end{lema}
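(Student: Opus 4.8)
The plan is to prove $h_2(\xi) = \delta_2^{-1}[h_1(\xi)]$ by establishing the two inclusions separately, using the fact that for $\xi \in D_1 \cap D_2$ the bijection $e$ fixes $\xi$ and that $\delta_2$ restricted to $D_1 \cap D_2$ is the identity (Lemma \ref{lemaDelta}(b)).

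First I would unwind the definition of $\delta_2^{-1}[h_1(\xi)]$. By definition, $\eta \in \delta_2^{-1}[h_1(\xi)]$ means $\eta \in dom(\delta_2)$ and $\delta_2(\eta) \in h_1(\xi)$, i.e. the least $\delta \in D_1 \cap D_2$ with $\eta \in h_2(\delta)$ lies in $h_1(\xi)$. For the inclusion $\delta_2^{-1}[h_1(\xi)] \subseteq h_2(\xi)$, take such an $\eta$ and set $\delta = \delta_2(\eta)$; then $\eta \in h_2(\delta)$ and $\delta \in h_1(\xi) \cap (D_1 \cap D_2)$. Since $\xi \in D_1 \cap D_2$, Lemma \ref{lemaAboutBijection}(c) gives $h_1(\xi) = e^{-1}[h_2(\xi)]$, and because $e$ fixes points of $D_1 \cap D_2$ we get $\delta = e(\delta) \in h_2(\xi)$. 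The key structural point is then to pass from $\delta \in h_2(\xi)$ and $\eta \in h_2(\delta)$ to $\eta \in h_2(\xi)$; I would argue that since $h_2$ satisfies condition 2 of Definition \ref{defiForcing} (namely $\max h_2(\delta) = \delta$) together with the compactness/basis property $(+)$, membership propagates transitively through the $h$-structure, so $\eta \in h_2(\xi)$.

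For the reverse inclusion $h_2(\xi) \subseteq \delta_2^{-1}[h_1(\xi)]$, I would take $\eta \in h_2(\xi)$ and observe that since $\xi \in D_1 \cap D_2$, the point $\eta$ lies in $h_2(\delta)$ for some $\delta \in D_1 \cap D_2$ (namely $\delta = \xi$ works), so $\eta \in dom(\delta_2)$ and $\delta_2(\eta)$ is well defined. It then remains to show $\delta_2(\eta) \in h_1(\xi)$. Writing $\delta = \delta_2(\eta) \le \xi$, we have $\eta \in h_2(\delta) \subseteq h_2(\xi)$ with $\delta \in D_1 \cap D_2$, and I would use Lemma \ref{lemaAboutBijection}(c) backwards: since $\delta \in h_2(\xi)$ and $e$ fixes $\delta$, we get $\delta = e^{-1}(\delta) \in e^{-1}[h_2(\xi)] = h_1(\xi)$, as desired.

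The main obstacle I anticipate is the transitivity-type step in the first inclusion: justifying that $\delta \in h_2(\xi)$ and $\eta \in h_2(\delta)$ force $\eta \in h_2(\xi)$. This is not immediate from the forcing definition alone, since conditions 3.(a)--(c) only control how the sets $h_p$ interact via the operation $*$ and the function $i_p$, not a raw transitivity of membership. I expect the clean route is to appeal to the topological picture: in $L_f$ the sets $h(\xi)$ are compact and the family in $(+)$ forms a local basis, and the generic $h_2$ agrees with the restriction of the generic $h$ to $D_2$; in that context the generic neighborhoods are nested in the expected way, so that whenever $\delta$ belongs to the compact set $h(\xi)$, its own generic neighborhood $h(\delta)$ is contained in $h(\xi)$, yielding $\eta \in h_2(\xi)$. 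Making this precise while staying inside the combinatorics of a single pair of conditions $p_1, p_2$ — rather than invoking the full generic object — is the delicate bookkeeping I would need to carry out carefully.
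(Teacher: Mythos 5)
Your overall decomposition (two inclusions, with Lemma \ref{lemaAboutBijection}(c) and the fact that $e$ and $\delta_2$ fix $D_1\cap D_2$ used to move between $h_1(\xi)$ and $h_2(\xi)$) matches the paper, but the step you yourself flag as the main obstacle is a genuine gap, and it cannot be repaired along the lines you suggest. The ``transitivity'' principle you invoke --- from $\delta\in h_2(\xi)$ and $\eta\in h_2(\delta)$ conclude $\eta\in h_2(\xi)$ --- is false for conditions of $\PP_f$, and its failure is the entire point of clause 3 of Definition \ref{defiForcing}: if $\delta\in h_p(\xi)$ implied $h_p(\delta)\subseteq h_p(\xi)$, the set $h_p(\delta)\setminus h_p(\xi)$ in 3.(a) would always be empty and the function $i_p$ would be superfluous. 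Concretely, for $\alpha<\beta<\gamma$ with $\alpha\in f(\{\beta,\gamma\})$, the condition with $h_p(\alpha)=\{\alpha\}$, $h_p(\beta)=\{\alpha,\beta\}$, $h_p(\gamma)=\{\beta,\gamma\}$ and $i_p(\{\beta,\gamma\})=\{\alpha\}$ satisfies Definition \ref{defiForcing}, yet $\beta\in h_p(\gamma)$ while $\alpha\in h_p(\beta)\setminus h_p(\gamma)$; and since any $q\leq p$ must satisfy $h_q(\gamma)\cap D_p=h_p(\gamma)$, this non-nestedness persists into the generic object. For the same reason your proposed topological repair fails: assertion $(+)$ only says that some set of the form $h(\delta)\setminus\bigcup_{\eta\in F}h(\eta)$ is contained in the clopen set $h(\xi)$, never that $h(\delta)$ itself is. The same false nesting is hidden in your reverse inclusion, where ``$\eta\in h_2(\delta)\subseteq h_2(\xi)$'' and, crucially, the assertion $\delta_2(\eta)\in h_2(\xi)$ appear without justification; the latter is precisely the nontrivial half of that inclusion.

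What replaces transitivity in the paper's proof is the interplay of clause 3 of Definition \ref{defiForcing} with the minimality built into $\delta_2$. Write $\delta=\delta_2(\eta)$, so $\eta\in h_2(\delta)$ and $\delta\leq\xi$ in both halves. In the forward inclusion, one has $\delta\in h_2(\xi)$ and supposes $\eta\notin h_2(\xi)$; then $\eta\in h_2(\delta)\setminus h_2(\xi)=h_2(\delta)*h_2(\xi)$, so clause 3 produces $\gamma\in i_2(\{\delta,\xi\})$ with $\eta\in h_2(\gamma)$, and since $i_2(\{\delta,\xi\})\subseteq f(\{\delta,\xi\})\subseteq\min\{\delta,\xi\}$, such a $\gamma$ lies strictly below $\delta_2(\eta)$, contradicting the minimality that defines $\delta_2(\eta)$. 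In the reverse inclusion one supposes instead $\delta\notin h_2(\xi)$; then $\eta\in h_2(\delta)\cap h_2(\xi)=h_2(\delta)*h_2(\xi)$ and the same contradiction appears, so $\delta\in h_2(\xi)$, and only then does the isomorphism transfer $\delta$ into $h_1(\xi)$. Note how the operation $*$ automatically switches between $\setminus$ and $\cap$ according to whether $\delta\in h_2(\xi)$: this case analysis, driven by the coherence condition and minimality rather than by any nesting of the generic neighborhoods, is the idea missing from your proposal.
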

\begin{proof}
Let $\xi \in D_1 \cap D_2$.

Suppose that $\eta\in dom(\delta_2)$ and 
$\delta_2(\eta)
 \in h_1(\xi)$. Since $\delta_2(\eta), \xi \in D_1 \cap D_2$, it follows
 from \ref{defiIsomorphicConditions}.(a) and (b) that $\delta_2(\eta)
 \in h_2(\xi)$. Suppose that $\eta \notin h_2(\xi)$. Then,
 $\eta \in h_2(\delta_2(\eta))*h_2(\xi)$ so that there is
 $\delta \in i_2(\{\delta_2(\eta), \xi\})$ such that $\eta 
\in h_2(\delta)$, which contradicts the minimality of 
$\delta_2(\eta)$ and concludes the proof of the inclusion $\delta_2^{-1}[h_1(\xi)]\subseteq h_2(\xi)$.

Reciprocally, if $\eta \in h_2(\xi)$,
 then $\eta \in dom(\delta_2)$. Suppose that
 $\delta_2(\eta) \notin h_2(\xi)$. 
Then, $\eta \in h_2(\delta_2(\eta))
 * h_2(\xi)$ so that there is 
$\delta \in i_2(\{\delta_2(\eta), \xi\})$
 such that $\eta \in h_2(\delta)$, which
 contradicts the minimality of $\delta_2(\eta)$. 
So, $\delta_2(\eta) \in h_2(\xi)$ and since $\delta_2(\eta),
 \xi \in D_1 \cap D_2$, it follows from \ref{defiIsomorphicConditions}.(a) 
and (b) that $\delta_2(\eta) \in h_1(\xi)$, concluding the proof of the lemma.
\end{proof}

In the proof of c.c.c., Rabus, Juh\'asz and Soukup considered the
minimal amalgamation which is constructed in
a symmetric way with respect to both of the conditions being extended. 
We will consider an asymmetric amalgamation. The lack of symmetry in our amalgamation is 
the result of using two functions, $\delta_2$ and $e$, in the definition
of the amalgamation.
The final auxiliary
lemma below characterizes the sets given by the operation $*$ for
elements of the extended condition. The role of the function $g$ will be played
by $\delta_2$ or by $e$.

\begin{lema}\label{lemaFuncaoG}
Let $p = (D_p, h_p, i_p) \in \PP_f$ and let $D_q \in [\omega_2]^{<
  \omega}$, $h_q: D_q \rightarrow \wp(D_q)$ and $g: dom(g)
\rightarrow D_p$ be such that
\begin{enumerate}[(i)]
        \item $D_p \subseteq D_q$; $dom(g)\subseteq D_q$, 
        \item for all $\xi \in D_p\cap dom(g)$ we have $g(\xi)=\xi$,

        \item for all $\xi \in D_p$, $h_q(\xi)  = h_p(\xi)\cup g^{-1}[h_p(\xi)]$.
\end{enumerate}
Then, for all $\xi, \eta \in D_p$, $\xi<\eta$, we have that
        $$(h_q(\xi) * h_q(\eta)) \cap D_p = h_p(\xi) * h_p(\eta)$$
and
        $$(h_q(\xi) * h_q(\eta)) \cap (D_q \setminus D_p) = g^{-1}[h_p(\xi) * h_p(\eta)] \cap (D_q \setminus D_p).$$
\end{lema}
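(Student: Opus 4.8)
The plan is to reduce everything to two descriptions of the generic neighborhoods $h_q$, namely that each $h_q(\xi)$ with $\xi\in D_p$ splits cleanly across the partition $D_q=D_p\cup(D_q\setminus D_p)$. Concretely, the first thing I would prove are the auxiliary identities
$$h_q(\xi)\cap D_p=h_p(\xi)\qquad\text{and}\qquad h_q(\xi)\cap(D_q\setminus D_p)=g^{-1}[h_p(\xi)]\cap(D_q\setminus D_p)$$
for every $\xi\in D_p$. Both are immediate from hypothesis (iii), $h_q(\xi)=h_p(\xi)\cup g^{-1}[h_p(\xi)]$, together with $h_p(\xi)\subseteq D_p$: the inclusion $h_p(\xi)\subseteq D_p$ kills the $h_p(\xi)$ summand on $D_q\setminus D_p$, while on $D_p$ hypothesis (ii) makes $g$ the identity, so that $g^{-1}[h_p(\xi)]\cap D_p\subseteq h_p(\xi)$ and the $g^{-1}$ summand is absorbed into $h_p(\xi)$.

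Next I would check that the operation $h_q(\xi)*h_q(\eta)$ falls into the same clause of the definition of $*$ as $h_p(\xi)*h_p(\eta)$. Since $\max h_p(\xi)=\xi$ and, in the intended applications where $g$ is $\delta_2$ or $e$ and hence non-decreasing off $D_p$, also $\max h_q(\xi)=\xi$, the relevant clause is decided by whether $\xi\in h_q(\eta)$, respectively $\xi\in h_p(\eta)$. For $\xi\in D_p$ the first auxiliary identity gives $\xi\in h_q(\eta)\Leftrightarrow\xi\in h_q(\eta)\cap D_p=h_p(\eta)$, so the two coincide; thus either both $*$'s are the difference $\setminus$ or both are the intersection $\cap$.

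With this in hand both displayed equalities become a uniform Boolean bookkeeping computation. For the first equality I intersect $h_q(\xi)*h_q(\eta)$ with $D_p$; as both $\cap$ and $\setminus$ commute with intersecting by $D_p$, the first auxiliary identity turns $h_q(\xi)\cap D_p$ and $h_q(\eta)\cap D_p$ into $h_p(\xi)$ and $h_p(\eta)$, producing exactly $h_p(\xi)*h_p(\eta)$. For the second equality I intersect with $D_q\setminus D_p$ and use that preimages under $g$ commute with both operations, i.e. $g^{-1}[h_p(\xi)]\,\square\,g^{-1}[h_p(\eta)]=g^{-1}[h_p(\xi)\,\square\,h_p(\eta)]$ for $\square$ either $\cap$ or $\setminus$; the second auxiliary identity then yields $g^{-1}[h_p(\xi)*h_p(\eta)]\cap(D_q\setminus D_p)$. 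One small point in the difference case: for $\zeta\in D_q\setminus D_p$ one must note that $\zeta\in h_q(\eta)$ is equivalent to $\zeta\in g^{-1}[h_p(\eta)]$, which is again the second identity.

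The genuinely delicate step is not the algebra but the well-definedness of $h_q(\xi)*h_q(\eta)$, that is, the claim $\max h_q(\xi)=\xi$. This is the only place where the abstract hypotheses (i)--(iii) do not by themselves suffice: one needs that $g$ sends no point lying above $\xi$ into $h_p(\xi)$, equivalently that $g$ is non-decreasing on the relevant part of its domain. I would therefore verify this at the very outset using the concrete nature of $g$ --- for $g=\delta_2$ by Lemma \ref{lemaDelta}(a), and for $g=e$ from the hypothesis that $p_1$ is lower than $p_2$ --- after which the case analysis above goes through mechanically.
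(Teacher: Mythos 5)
Your argument is correct and is essentially the paper's own: the paper likewise reduces both equalities to the observation that, for points of $D_p$, membership in $h_q(\cdot)$ and in $h_p(\cdot)$ coincide (this is your first auxiliary identity, obtained from (ii) and (iii)), so that both $*$'s fall into the same clause, and then performs the same Boolean bookkeeping, distributing the intersections with $D_p$ and with $D_q\setminus D_p$ over $\cap$ and $\setminus$ and using that $g$-preimages commute with both operations.

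The one place you go beyond the paper is your final paragraph, and your concern there is legitimate. Hypotheses (i)--(iii) do not imply $\max h_q(\xi)=\xi$: nothing prevents some $\zeta\in dom(g)\setminus D_p$ with $\zeta>\xi$ from having $g(\zeta)\in h_p(\xi)$. In that situation $h_q(\xi)*h_q(\eta)$ is not even well-formed under the paper's definition of $*$ (which presupposes that the maximum of the first argument lies below that of the second, and decides the clause by that maximum); and if one applies the defining formula of $*$ blindly, the first displayed equality can genuinely fail, since the clause for the pair $(h_q(\xi),h_q(\eta))$ may then be decided by a point of $g^{-1}[h_p(\xi)\setminus h_p(\eta)]$ rather than by $\xi$. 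The paper's proof silently assumes that the clause is decided by whether $\xi\in h_q(\eta)$. As you say, the gap is harmless in both intended applications, where $g=\delta_2$ or $g=e$ and $\zeta\leq g(\zeta)$ for every $\zeta\in dom(g)$ (Lemma \ref{lemaDelta} and the ``lower than'' hypothesis), so every element of $g^{-1}[h_p(\xi)]$ is at most $\xi$ and $\max h_q(\xi)=\xi$; the paper only makes this monotonicity explicit later, when verifying condition 2 of Definition \ref{defiForcing} for the amalgamation in Lemma \ref{lemaAmalgamacaoNova}. Recording it as an explicit additional hypothesis of the present lemma, as you propose, is the cleaner repair.
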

\begin{proof}
Since $\xi, \eta \in D_p$, $\xi<\eta$, by (ii) we have that $\xi\in h_p(\eta)$ if and
only if $\xi\in h_q(\eta)$, so (ii) obviously gives $(h_q(\xi) * h_q(\eta)) \cap D_p = h_p(\xi) * h_p(\eta)$.

Now suppose $\xi \in h_q(\eta)$, so $\xi \in
h_p(\eta)$ and so by (iii)
$$(h_q(\xi)\setminus h_q(\eta)) \cap (D_q \setminus D_p) = (h_q(\xi)
        \cap (D_q\setminus D_p)) \setminus (h_q(\eta)\cap (D_q \setminus
        D_p))$$
        $$= (g^{-1}[h_p(\xi)] \cap (D_q \setminus D_p)) \setminus (g^{-1}[h_p(\eta)] \cap (D_q \setminus D_p))
 = g^{-1}[h_p(\xi)\setminus
        h_p(\eta)] \cap (D_q \setminus D_p).$$

On the other hand, if $\xi \notin
h_q(\eta)$, then $\xi \notin h_p(\eta)$ and so by (iii)
        $$(h_q(\xi)\cap h_q(\eta)) \cap (D_q \setminus D_p) = (h_q(\xi) \cap
        (D_q\setminus D_p)) \cap (h_q(\eta)\cap (D_q \setminus D_p))$$
        $$= (g^{-1}[h_p(\xi)] \cap (D_q \setminus D_p)) \cap (g^{-1}[h_p(\eta)] \cap 
(D_q \setminus D_p)) = g^{-1}[h_p(\xi)\cap
        h_p(\eta)] \cap (D_q \setminus D_p),$$
concluding the proof of the lemma.
\end{proof}

Now we go to our key lemma: a strong hypothesis about the behaviour of the
function $f$ allows us to amalgamate two isomorphic conditions, one lower than the other, into a common extension $q$
in such a way that
$h(\xi)\cap D_q\subseteq h[e(\xi)]\cap D_q$ for $\xi$ in the domain of the lower of the two conditions.

\begin{lema}\label{lemaAmalgamacaoNova}
Let $p_1 = (D_1, h_1, i_1)$, $p_2 =(D_2, h_2, i_2) \in \PP_f$ be two
isomorphic conditions and suppose $p_1$ is lower than $p_2$. Let $e: D_1 \rightarrow D_2$ be the order-preserving
bijective function and assume that
\begin{enumerate}[(A)]
        \item if $\xi, \eta \in D_1 \cap D_2$ and $\xi \neq \eta$, then
        $i_1(\{\xi, \eta\}) = i_2(\{\xi, \eta\})$;
        \item for all $\zeta \in D_1 \cap D_2$, all $\xi \in D_1\setminus D_2$
        and all $\eta \in D_2 \setminus D_1$:
\begin{enumerate}[(i)]
       \item if $\zeta < \xi$, then $f(\{\zeta, \eta\}) \subseteq
       f(\{\xi,\eta\})$;
       \item $D_1 \cap \xi \cap \eta \subseteq f(\{\xi, \eta\})$.
\end{enumerate}
\end{enumerate}
Then there is $q \in \PP_f$, $q \leq p_1, p_2$, such that for all $\xi \in D_1$
and all $\eta \in D_2$:
        $$\xi \in h_q(\eta) \text{ if and only if } e(\xi) \in h_2(\eta).$$
\end{lema}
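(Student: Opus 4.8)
The goal is to build a single condition $q$ extending both $p_1$ and $p_2$ in which every point $\xi\in D_1$ sits inside exactly those generic neighborhoods $h_q(\eta)$ (for $\eta\in D_2$) that its isomorphic image $e(\xi)$ already sits inside with respect to $h_2$. I would take $D_q = D_1\cup D_2$ and prescribe the neighborhood function $h_q$ on the original generators $D_1$ and $D_2$ via the two auxiliary maps $\delta_2$ and $e$, using Lemma \ref{lemaFuncaoG} as the engine that controls the $*$-operation on the extended domain. Concretely, the plan is to set, for each $\zeta\in D_1\cap D_2$, $h_q(\zeta)$ to be $h_2(\zeta)\cup\delta_2^{-1}[h_1(\zeta)]$ (which by Lemma \ref{lemaJuhasz} collapses the $\delta_2$-part back to $h_2(\zeta)$ on $D_2$), while for points of $D_1\setminus D_2$ the new neighborhood is fed through $e$ and for points of $D_2\setminus D_1$ through $\delta_2$; the function $g$ in Lemma \ref{lemaFuncaoG} is played alternately by $\delta_2$ and by $e$, exactly as flagged in the remarks preceding that lemma.

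\textbf{Key steps.} First I would fix the domain $D_q=D_1\cup D_2$ and write down explicit formulas for $h_q$ and $i_q$, making sure that condition 2 of Definition \ref{defiForcing} ($\max h_q(\xi)=\xi$) is preserved and that $h_q$ restricts correctly: $h_q(\xi)\cap D_1=h_1(\xi)$ for $\xi\in D_1$ and $h_q(\xi)\cap D_2=h_2(\xi)$ for $\xi\in D_2$, so that $q\leq p_1,p_2$. For the $i_q$ part I would define $i_q$ on pairs inside $D_1$ or inside $D_2$ to agree with $i_1$ and $i_2$ (using hypothesis (A) for consistency on $[D_1\cap D_2]^2$), and on mixed pairs $\{\xi,\eta\}$ with $\xi\in D_1\setminus D_2$, $\eta\in D_2\setminus D_1$ I would define $i_q(\{\xi,\eta\})$ through the $f$-values, invoking hypotheses (B)(i) and (B)(ii) to guarantee both that the requisite auxiliary sets land inside $f(\{\xi,\eta\})$ (for clause 3(c)) and that the coherence inclusions $h_q(\xi)*h_q(\eta)\subseteq\bigcup_{\gamma\in i_q(\{\xi,\eta\})}h_q(\gamma)$ hold. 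Throughout, Lemma \ref{lemaFuncaoG} reduces checking the coherence condition 3 on $D_q$ to the already-known coherence of $p_1$ and $p_2$ together with a set-theoretic identity about preimages under $g$, so the verification splits cleanly into the part landing in $D_p$ and the part landing in $D_q\setminus D_p$. Finally, the defining property $\xi\in h_q(\eta)\Leftrightarrow e(\xi)\in h_2(\eta)$ should fall out of the chosen formula for $h_q$ by unwinding how $e$ and $\delta_2$ were used, together with Lemma \ref{lemaAboutBijection}.

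\textbf{Main obstacle.} The delicate point will be verifying clause 3 (conditions (a), (b), and (c) of Definition \ref{defiForcing}) precisely for the \emph{mixed} pairs $\{\xi,\eta\}$ with $\xi\in D_1\setminus D_2$ and $\eta\in D_2\setminus D_1$, since these are genuinely new pairs not governed by either $p_1$ or $p_2$. Here the asymmetry between the two conditions matters: because $p_1$ is lower than $p_2$ (so $\xi\leq e(\xi)$), the $f$-monotonicity supplied by (B)(i) is exactly what lets me bound $f(\{\zeta,\eta\})$ by $f(\{\xi,\eta\})$ when passing witnesses through the structure, and (B)(ii) supplies the coverage of the cross-intersection $D_1\cap\xi\cap\eta$ needed to keep $i_q(\{\xi,\eta\})\subseteq f(\{\xi,\eta\})$. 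I expect the bookkeeping to require carefully tracking, for each generator $\gamma$ appearing in a coherence witness, whether $\gamma$ belongs to $D_1\cap D_2$, to $D_1\setminus D_2$, or to $D_2\setminus D_1$, and then applying the appropriate instance of Lemma \ref{lemaFuncaoG}; assembling these cases into the single inclusion required by clause 3 is where the real work lies.
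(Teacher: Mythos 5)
Your outline does follow the paper's architecture: $D_q=D_1\cup D_2$; $i_q$ equal to $i_1$, $i_2$ on pairs from $D_1$, $D_2$ (consistent by (A)) and equal to $f(\{\xi,\eta\})\cap D_q$ on mixed pairs; Lemma \ref{lemaFuncaoG} applied with $g=\delta_2$ and $g=e$; and hypotheses (B)(i), (B)(ii) reserved for the genuinely new pairs. But the definition of $h_q$ --- the asymmetric heart of the lemma --- is garbled in a way that would make the proof fail. The target biconditional $\xi\in h_q(\eta)\Leftrightarrow e(\xi)\in h_2(\eta)$ for $\xi\in D_1$, $\eta\in D_2$ forces $h_q(\eta)\cap(D_1\setminus D_2)=e^{-1}[h_2(\eta)]$: it is the points of $D_2$ whose neighborhoods must be enlarged through $e$, namely $h_q(\eta)=h_2(\eta)\cup e^{-1}[h_2(\eta)]$ for $\eta\in D_2$, and correspondingly $h_q(\xi)=h_1(\xi)\cup\delta_2^{-1}[h_1(\xi)]$ for $\xi\in D_1$. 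Your prescription attaches the maps the other way around ($e$ to points of $D_1\setminus D_2$, $\delta_2$ to points of $D_2\setminus D_1$), and this is not a harmless relabeling: $\delta_2$ takes values in $D_1\cap D_2$, so a preimage under $\delta_2$ is always a subset of $D_2$ and can never place a point of $D_1\setminus D_2$ inside $h_q(\eta)$; with your assignment the biconditional already fails at $\eta=e(\xi)$, since $e(\xi)\in h_2(e(\xi))$ but $\xi\notin h_q(e(\xi))$. The one formula you write explicitly has the same defect: for $\zeta\in D_1\cap D_2$ you set $h_q(\zeta)=h_2(\zeta)\cup\delta_2^{-1}[h_1(\zeta)]$, which by Lemma \ref{lemaJuhasz} is just $h_2(\zeta)$; then $h_q(\zeta)\cap D_1=h_1(\zeta)\cap D_2$, which omits $h_1(\zeta)\cap(D_1\setminus D_2)$, so $q\leq p_1$ fails. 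The correct value at the root is $h_1(\zeta)\cup h_2(\zeta)$, which is what either of the two correct clauses produces (via Lemmas \ref{lemaJuhasz} and \ref{lemaAboutBijection}).

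A second, smaller gap: you rightly flag the mixed pairs as the crux, but your sketch of how (B)(i) enters does not reach the one subcase where the lemma is actually hard. When the smaller element $\xi$ of a mixed pair lies in $D_2\setminus D_1$, $\eta\in D_1\setminus D_2$, and the point $\zeta\in h_q(\xi)*h_q(\eta)$ to be covered also lies in $D_2\setminus D_1$, hypothesis (B)(ii) only settles the case $\delta_2(\zeta)<\xi$; otherwise the paper must introduce the auxiliary ordinal $\theta=\min\{\delta_2(\xi),\delta_2(\zeta)\}$, prove that exactly one of $\xi,\zeta$ lies in $\delta_2^{-1}[h_1(\eta)]$ (whence $\theta<\eta$ and $\delta_2(\xi)\neq\delta_2(\zeta)$), show $\zeta\in h_2(\xi)*h_2(\theta)$, and only then extract a witness from $i_2(\{\xi,\theta\})\subseteq f(\{\xi,\theta\})\subseteq f(\{\xi,\eta\})$, the last inclusion being exactly (B)(i) applied to $\theta<\eta$. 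This detour through an auxiliary pair lying inside $D_2$ is the real content of the verification of clause 3 of Definition \ref{defiForcing} for new pairs, and a plan that only says witnesses are "passed through the structure" with $f$-monotonicity does not yet contain it.
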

\begin{proof} 
We define $q=(D_q, h_q, i_q)$ by: $D_q = D_1\cup D_2$;
        $$h_q(\xi) = \left\{
                     \begin{array}{ll}
                    
                     h_1(\xi) \cup \delta_2^{-1}[h_1(\xi)]  & \text{if
                     }\xi \in D_1,\\
                     h_2(\xi) \cup e^{-1}[h_2(\xi)] & \text{if } \xi \in D_2;
                     \end{array} \right.$$
and
$$i_q(\{\xi, \eta\}) = \left\{
                        \begin{array}{ll}
                        i_1(\{\xi, \eta\}) & \text{if } \xi, \eta \in D_1,\\
                        i_2(\{\xi, \eta\}) & \text{if } \xi, \eta \in D_2,\\
                        f(\{\xi, \eta\}) \cap D_q& \text{otherwise.}
                        \end{array} \right.$$
Note that (A) implies that the set $i_q(\{\xi, \eta\})$ is well-defined for any
$\xi, \eta \in D_1 \cap D_2$, $\xi \neq \eta$; clearly $i_q$ is
well-defined for the other pairs. Also, if $\xi\in D_1\cap D_2$, then the set
$h_q(\xi)$ is well-defined because both of the conditions reduce to
$h_q(\xi)=h_1(\xi)\cup h_2(\xi)$ by Lemmas \ref{lemaJuhasz} and \ref{lemaAboutBijection}.(c).

We have to show that $q \in \PP_f$, i.e., that $q$ satisfies conditions 1, 2
and 3 from Definition \ref{defiForcing}. The fact that $q$ satisfies
conditions \ref{defiForcing}.1 and \ref{defiForcing}.3.(c) follows directly
from the definition of $q$ and from the fact that $p_1, p_2 \in \PP_f$.
Condition \ref{defiForcing}.2. is satisfied because $p_1, p_2 \in \PP_f$ and
the functions $e$ and $\delta_2$ are nondecreasing. In what follows we will be using
Lemma \ref{lemaFuncaoG} for $p=p_1, p_2$ and $g=\delta_2, e$ respectively. 
The hypothesis of the lemma is satisfied for these objects by \ref{lemaDelta}.(b) and 
\ref{defiIsomorphicConditions}.(b).

Now we check conditions
  \ref{defiForcing}.3.(a) and (b).
Let $\xi, \eta \in D_q$, $\xi< \eta$, and
we consider the following cases:

\vspace{0.3cm}

\noindent \textit{Case 1. $\xi, \eta \in D_1$.}

It follows from the definition of $q$ and from Lemma \ref{lemaFuncaoG} that
         $$(h_q(\xi)*h_q(\eta)) \cap D_1 = h_1(\xi) * h_1(\eta)$$
and
         $$(h_q(\xi)*h_q(\eta)) \cap (D_2 \setminus D_1)= \delta_2^{-1}[h_1(\xi)*h_1(\eta)] \cap (D_2 \setminus D_1).$$

Now let $\zeta \in h_q(\xi)*h_q(\eta)$.

\vspace{0.3cm}

\noindent \textit{Subcase 1.1. $\zeta \in D_1$.}

In this subcase, $\zeta \in h_1(\xi)* h_1(\eta)$ and there is $\gamma \in
i_1(\{\xi, \eta\}) = i_q(\{\xi, \eta\})$ such that $\zeta \in h_1(\gamma)
\subseteq h_q(\gamma)$, as we wanted.

\vspace{0.3cm}

\noindent \textit{Subcase 1.2. $\zeta \in D_2 \setminus D_1$.}

In this subcase, $\delta_2(\zeta) \in h_1(\xi)* h_1(\eta)$ and, since
$\delta_2(\zeta), \xi, \eta\in D_1$ and $p_1 \in \PP_f$, there is $\gamma \in
i_1(\{\xi, \eta\}) = i_q(\{\xi, \eta\})$ such that $\delta_2(\zeta) \in
h_1(\gamma)$. Since $\gamma \in D_1$, it follows by the definition of $q$ that $\zeta \in
h_q(\gamma)$, as we wanted.

\vspace{0.3cm}

\noindent \textit{Case 2. $\xi, \eta \in D_2$.}

It follows from the definition of $q$ and from Lemma \ref{lemaFuncaoG} that
         $$(h_q(\xi)*h_q(\eta)) \cap D_1 = h_1(\xi) * h_1(\eta)$$
and
         $$(h_q(\xi)*h_q(\eta)) \cap (D_2 \setminus D_1)= e^{-1}[h_1(\xi)*h_1(\eta)] \cap (D_2 \setminus D_1).$$

Now let $\zeta \in h_q(\xi)*h_q(\eta)$.

\vspace{0.3cm}

\noindent \textit{Subcase 2.1. $\zeta \in D_1 \setminus D_2$.}

In this subcase, $e(\zeta) \in h_2(\xi)* h_2(\eta)$ and, since $e(\zeta), \xi,
\eta \in D_2$ and $p_2 \in \PP_f$, there is $\gamma \in i_2(\{\xi, \eta\}) =
i_q(\{\xi, \eta\})$ such that $e(\zeta) \in h_2(\gamma)$. Since $\gamma \in
D_2$, it follows by the definition of $q$ that $\zeta \in h_q(\gamma)$, as we wanted.

\vspace{0.3cm}

\noindent \textit{Subcase 2.2. $\zeta \in D_2$.}

In this subcase, $\zeta \in h_2(\xi)* h_2(\eta)$ and there is $\gamma \in
i_2(\{\xi, \eta\}) = i_q(\{\xi, \eta\})$ such that $\zeta \in h_2(\gamma)
\subseteq h_q(\gamma)$, as we wanted.

\vspace{0.3cm}

\noindent \textit{Case 3. $\xi \in D_1\setminus D_2$ and $\eta \in D_2 \setminus D_1$.}

Here we fix $\zeta \in h_q(\xi)*h_q(\eta)$ and we consider the following subcases:

\vspace{0.3cm}

\noindent \textit{Subcase 3.1. $\zeta \in D_1$.}

In this subcase, $\zeta \in D_1 \cap \xi \cap \eta$ and it follows from
(B).(ii) that $\zeta \in f(\{\xi, \eta\})$. Hence, $\zeta \in D_1 \cap
f(\{\xi, \eta\}) \subseteq D_q \cap f(\{\xi, \eta\})= i_q(\{\xi,
\eta\})$. Taking $\gamma = \zeta$, we conclude that $\zeta \in h_q(\gamma)$ and
$\gamma \in i_q(\{\xi, \eta\})$, as we wanted.

\vspace{0.3cm}

\noindent \textit{Subcase 3.2. $\zeta \in D_2 \setminus D_1$.}

First note that, regardless of the fact whether $h_q(\xi)*h_q(\eta)=h_q(\xi)\cap h_q(\eta)$ 
or $h_q(\xi)*h_q(\eta)=h_q(\xi)\setminus h_q(\eta)$, the assumption $\zeta \in h_q(\xi)*h_q(\eta)$ 
implies that $\zeta \in h_q(\xi)$ 

In this subcase,  it follows from the definition of $h_q(\xi)$ that $\delta_2(\zeta) \in
h_1(\xi)$, so that $\delta_2(\zeta) \in D_1 \cap \xi \cap \eta$; and it follows
from (B).(ii) that $\delta_2(\zeta) \in f(\{\xi, \eta\}) \cap D_q = i_q(\{\xi,
\eta\})$. By the definition of $\delta_2(\zeta)$, $\zeta \in h_2(\delta_2(\zeta))
\subseteq h_q(\delta_2(\zeta))$. Taking $\gamma = \delta_2(\zeta)$, we have that
$\zeta \in h_q(\gamma)$ and $\gamma \in i_q(\{\xi, \eta\})$, concluding the
proof of this subcase.

\vspace{0.3cm}

\noindent \textit{Case 4. $\xi \in D_2\setminus D_1$ and $\eta \in D_1 \setminus D_2$.}

Again we fix $\zeta \in h_q(\xi)*h_q(\eta)$ and we consider the following subcases:

\vspace{0.3cm}

\noindent \textit{Subcase 4.1. $\zeta \in D_1$.}

The proof in this subcase follows identically to the proof of Subcase 3.1.

\vspace{0.3cm}

\noindent \textit{Subcase 4.2\footnote{This case is similar to Subcase 2.2 in the proof
    of Claim 2.7.2 of \cite{JuhaszSoukup}.}. $\zeta \in D_2 \setminus D_1$.}

We start this last subcase by proving the following:

\vspace{0.3cm}

\noindent \textbf{Fact 1.} \textit{$\{\zeta, \xi\}\cap dom(\delta_2)$ is a
  nonempty set such that $\min\delta_2[\{\zeta,\xi\}]< \eta$ and if 
both
  $\zeta$ and $\xi$ are in $dom(\delta_2)$, then $\delta_2(\zeta) \neq \delta_2(\xi)$.}

\vspace{0.3cm}

\noindent \textit{Proof of Fact 1.} First remark that, from the definition of
  $*$, it follows that if $\xi \notin h_q(\eta)$, then $\zeta \in h_q(\xi) *
  h_q(\eta) = h_q(\xi) \cap h_q(\eta)$ and therefore $\zeta \in
  h_q(\eta)$. Analogously, if $\xi \in h_q(\eta)$, then $\zeta \in h_q(\xi) *
  h_q(\eta) = h_q(\xi) \setminus h_q(\eta)$ and therefore $\zeta \notin
  h_q(\eta)$. So,
        $$|\{\zeta, \xi\} \cap h_q(\eta)|=1.$$
From the definition of $q$ we have that, since $\xi,\zeta\not\in D_1$ and
$\eta\in D_1$ in this subcase, the above means that
        $$|\{\zeta, \xi\} \cap \delta_2^{-1}[h_1(\eta)]| = 1,$$
so that $\{\zeta, \xi\}\cap dom(\delta_2)$ is a nonempty set. The above observation
also implies that $\delta_2[\{\zeta,\xi\}]\cap h_1(\eta)\not=\emptyset$ and
so, $\min\delta_2[\{\zeta,\xi\}]\leq\eta$. Now  
since $\eta \notin D_2$ and the range of $\delta_2$ is included in $D_1\cap D_2$, the
inequality must be strict.

Finally, we have seen that $\zeta \in h_q(\eta)$ if and only if $\xi \notin
h_q(\eta)$ and so, if both $\zeta$ and $\xi$ are in the domain of $\delta_2$, it follows
that $\delta_2(\zeta) \in h_1(\eta)$ if and only if $\delta_2(\xi) \notin
h_1(\eta)$, so that $\delta_2(\zeta) \neq \delta_2(\xi)$, concluding the proof of
Fact 1.

\vspace{0.3cm}

Take $\theta = \min\{\delta_2(\xi), \delta_2(\zeta)\}$ and note that $\theta \neq
\xi$ since $\xi\in D_2\setminus D_1$ and the range of $\delta_2$ is
included in $D_1\cap D_2$. We go now to the following subcases:

\vspace{0.3cm}

\noindent \textit{Subcase 4.2.1. $\theta <\xi$.}

Here, $\theta \neq \delta_2(\xi)$ and therefore $\delta_2(\zeta) = \theta \in D_1
\cap \xi \cap \eta$. From condition (B).(ii), it follows that $\delta_2(\zeta)
\in f(\{\xi,\eta\}) \cap D_q = i_q(\{\xi, \eta\})$. Since $\zeta \in
h_2(\delta_2(\zeta)) \subseteq h_q(\delta_2(\zeta))$, taking $\gamma =
\delta_2(\zeta)$, we have that $\zeta \in h_q(\gamma)$ and $\gamma \in
i_q(\{\xi,\eta\})$, as we wanted.

\vspace{0.3cm}

\noindent \textit{Subcase 4.2.2. $\theta >\xi$.}

Note that  $\zeta \in h_q(\xi)*h_q(\eta) \subseteq h_q(\xi)$.  
Since $\zeta$ and $\xi$ satisfying the hypothesis of the Case 4.2. are in $D_2\setminus D_1$,
it follows from the definition of $h_q$ 
 that $\zeta \in h_2(\xi)$. To finish, let us show the
following:

\vspace{0.3cm}

\noindent \textbf{Fact 2.} \textit{$\zeta \in h_2(\xi)*h_2(\theta)$.}

\vspace{0.3cm}

\noindent \textit{Proof of Fact 2.} First suppose $\theta = \delta_2(\xi)$.
If $\zeta\not\in dom(\delta_2)$, then $\zeta \notin h_2(\delta_2(\xi))$. 
If $\zeta\in dom(\delta_2)$, 
 from Fact 1 and
the minimality of $\delta_2(\zeta)$, it follows that
$\zeta \notin h_2(\delta_2(\xi))$. Since $\xi \in h_2(\delta_2(\xi))$, we have that
$\zeta \in h_2(\xi) \setminus h_2(\delta_2(\xi)) = h_2(\xi)*h_2(\theta)$.

Now suppose $\theta = \delta_2(\zeta)$. Analogously we prove that $\xi \notin
h_2(\delta_2(\zeta))$ and $\zeta \in h_2(\xi) \cap h_2(\delta_2(\zeta)) =
h_2(\xi)*h_2(\theta)$, concluding the proof of Fact 2.

\vspace{0.3cm}

Finally, since $p_2 \in \PP_f$, there is $\gamma \in i_2(\{\xi, \theta\})$
such that $\zeta \in h_2(\gamma) \subseteq h_q(\gamma)$. By condition
(B).(i), which can be used by Fact 1, we have that
        $$i_2(\{\xi, \theta\}) \subseteq f(\{\xi,\theta\}) \cap D_2 \subseteq
        f(\{\xi,\eta\}) \cap D_q = i_q(\{\xi, \eta\}).$$
Hence, $\gamma \in i_q(\{\xi, \eta\})$ and $\zeta \in h_q(\gamma)$,
concluding the proof of Subcase 4.2, Case 4 and thus concluding the proof of
Claim 2.

\vspace{0.3cm}

Now we know that $q \in \PP_f$ and let us check the other conclusions: it
follows easily from the definition of $q$ and Lemma \ref{lemaJuhasz}
that $q \leq
p_1$ and analogously it follows from the definition of $q$ and Lemma \ref{lemaDelta}
that $q \leq p_2$.

Finally, we verify the condition we want $q$ to satisfy, that
is, $\xi\in h_2(\eta)\cup e^{-1}[h_2(\eta)]$ if and
only if $e(\xi)\in h_2(\eta)$: let $\xi \in D_1$ and
$\eta \in D_2$ and we consider again the following cases:

\vspace{0.3cm}

\noindent \textit{Case 1. $\xi \in D_1 \cap D_2$.}

It follows from the fact that in this case $e(\xi) =\xi$.

\vspace{0.3cm}

\noindent \textit{Case 2. $\xi \in D_1 \setminus D_2$.}

In this case, $\xi\in h_2(\eta)\cup e^{-1}[h_2(\eta)]$ if and
only if $\xi\in e^{-1}[h_2(\eta)]$ if and only if 
 $e(\xi) \in h_2(\eta)$, concluding the proof of the lemma.
\end{proof}

\section{The main results}

To apply the key lemma proved in the previous section, 
the function $f$ on which the forcing $\PP_f$ depends must
  satisfy a stronger version of the property $\Delta$:

\begin{defi}\label{strongDeltaProperty}
A function $f:[\omega_2]^2\rightarrow[\omega_2]^{\leq\omega}$ has the strong property
 $\Delta$ if $f(\{\xi, \eta\}) \subseteq \min\{\xi, \eta\}$ for all $\{\xi, \eta\}
 \in [\omega_2]^2$ and for any uncountable $\Delta$-system $\mathcal{A}$ of finite subsets
of $\omega_2$, there are distinct $a,b\in\mathcal{A}$ and an
order-preserving bijection $e:a\rightarrow b$ which is the identity on $a\cap b$
and such that $\xi\leq e(\xi)$ for all $\xi\in a $ and
for any $\tau\in a\cap b$, any $\xi\in a\setminus b$ and any $\eta\in b\setminus a$ we have:
\begin{enumerate}[1)]
\item $a\cap\min\{\xi,\eta\}\subseteq f(\{\xi,\eta\})$;
\item $\tau<\xi\ \Rightarrow f(\{\tau,\eta\})\subseteq f(\{\xi,\eta\})$;
\item $\tau<\eta\ \Rightarrow f(\{\tau,\xi\})\subseteq f(\{\xi,\eta\})$.
\end{enumerate}
\end{defi}

Finally we arrive at the main result of this paper. 

\begin{teo}\label{teoHeredDens2}
If $f: [\omega_2]^2 \rightarrow [\omega_2]^{\leq
  \omega}$ has the strong property $\Delta$, then  
  $V^{\PP_f}$ satisfies ``for all $n \in \mathbb{N}$, $K_f^n$ is hereditarily
separable''.
\end{teo}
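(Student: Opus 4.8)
The plan is to show that no condition forces the existence of an uncountable left-separated subspace of $K_f^n$; since $K_f^n$ is compact Hausdorff, hence regular, this suffices, because a regular space is hereditarily separable if and only if it has no uncountable left-separated subspace (here $\{y_\alpha:\alpha<\omega_1\}$ is \emph{left-separated} when $y_\alpha\notin\overline{\{y_\beta:\beta<\alpha\}}$ for every $\alpha$). So suppose toward a contradiction that some $p^*\in\PP_f$ forces that $\langle \dot y_\alpha:\alpha<\omega_1\rangle$ enumerates such a subspace; since $\PP_f$ is c.c.c.\ (Theorem \ref{teoForcingCCC}), $\omega_1$ is preserved and the index set is genuinely $\omega_1^V$. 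Working in $V$ below $p^*$, I would first expose the combinatorial data. Each $\dot y_\alpha$ is a name for a point of $K_f^n=(L_f\cup\{*\})^n$, so each coordinate is forced to be an ordinal below $\omega_2$ or the point $*$; using the local basis $(+)$ and the fact that neighborhoods of $*$ are complements of finite unions $\bigcup_{\eta\in E}h(\eta)$, I fix a name $\dot U_\alpha$ for a basic open box around $\dot y_\alpha$ witnessing $\dot U_\alpha\cap\{\dot y_\beta:\beta<\alpha\}=\emptyset$. By density I then choose, for each $\alpha<\omega_1$, a condition $p_\alpha\le p^*$ deciding the value $\bar x_\alpha=(x^1_\alpha,\dots,x^n_\alpha)$ of $\dot y_\alpha$, deciding the finite data $(F^j_\alpha)_j,(E^j_\alpha)_j$ defining $\dot U_\alpha$, and whose domain $a_\alpha:=D_{p_\alpha}$ contains all the $x^j_\alpha$ together with $\bigcup_jF^j_\alpha\cup\bigcup_jE^j_\alpha$. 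Since $p_\alpha\le p^*$, it still forces $\dot U_\alpha\cap\{\dot y_\beta:\beta<\alpha\}=\emptyset$. (We may assume the $\bar x_\alpha$ are distinct.) This yields, in $V$, an uncountable family $\{a_\alpha:\alpha<\omega_1\}$ of finite subsets of $\omega_2$ carrying all the needed data.

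Next I would run the standard thinning. Apply the $\Delta$-system lemma and pigeonhole to get an uncountable $I\subseteq\omega_1$ such that $\{a_\alpha:\alpha\in I\}$ is a $\Delta$-system with root $R$, all $a_\alpha$ have the same order type and the same isomorphism type as conditions (so that for $\alpha,\beta\in I$ the order isomorphism $a_\alpha\to a_\beta$ fixing $R$ preserves the relation $\xi\in h(\eta)$, making $p_\alpha$ and $p_\beta$ isomorphic in the sense of Definition \ref{defiIsomorphicConditions}), the values $i_\alpha(\{\xi,\eta\})$ on pairs from $R$ are constant (so hypothesis (A) of Lemma \ref{lemaAmalgamacaoNova} holds; possible since $i_\alpha(\{\xi,\eta\})\subseteq f(\{\xi,\eta\})$ lies in a fixed countable set), the set $S\subseteq\{1,\dots,n\}$ of coordinates $j$ with $x^j_\alpha=*$ is constant, and the positions in $a_\alpha$ of the ordinal coordinates and of the elements of $F^j_\alpha,E^j_\alpha$ are constant. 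Finally, using that countable subsets of $\omega_1$ are bounded, I thin once more so that the blocks $a_\alpha\setminus R$ are strictly increasing along $I$; this guarantees that for $\alpha<\beta$ in $I$ the canonical isomorphism is nondecreasing, i.e.\ $p_\alpha$ is lower than $p_\beta$. Now apply the strong property $\Delta$ (Definition \ref{strongDeltaProperty}) to $\{a_\alpha:\alpha\in I\}$: it produces distinct $a=a_{\alpha_0}$, $b=a_{\alpha_1}$ and the order-preserving bijection $e:a\to b$, identity on $R$, with $\xi\le e(\xi)$; since the blocks increase along $I$, this forces $\alpha_0<\alpha_1$. Its clauses 1) and 2) are precisely hypotheses (B)(ii) and (B)(i) of Lemma \ref{lemaAmalgamacaoNova} for $p_1=p_{\alpha_0}$, $p_2=p_{\alpha_1}$, and $e$ witnesses that $p_{\alpha_0}$ is lower than $p_{\alpha_1}$.

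Then I would invoke Lemma \ref{lemaAmalgamacaoNova} to obtain $q\le p_{\alpha_0},p_{\alpha_1}$ with the crucial property that $\xi\in h_q(\eta)$ iff $e(\xi)\in h_2(\eta)$ for all $\xi\in a$ and $\eta\in b$, and check that $q$ forces $\bar x_{\alpha_0}\in\dot U_{\alpha_1}$ coordinate by coordinate. For $j\in S$ both $x^j_{\alpha_0}$ and $x^j_{\alpha_1}$ equal $*$, which lies in every neighborhood of $*$ (as $h(\eta)\subseteq\omega_2$), so nothing is needed. For an ordinal coordinate $j\notin S$ one has $e(x^j_{\alpha_0})=x^j_{\alpha_1}$, and since $q$ forces $h(\zeta)\cap D_q=h_q(\zeta)$ for $\zeta\in D_q$, membership is read off from $h_q$. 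The amalgamation property with $\eta=x^j_{\alpha_1}$ together with $\max h_2(x^j_{\alpha_1})=x^j_{\alpha_1}$ gives $x^j_{\alpha_0}\in h_q(x^j_{\alpha_1})$, hence $q\forces x^j_{\alpha_0}\in h(x^j_{\alpha_1})$; and the same property with $\eta\in F^j_{\alpha_1}$, together with $x^j_{\alpha_1}\notin h_2(\eta)$ (which holds because $p_{\alpha_1}$ decided $x^j_{\alpha_1}\in h(x^j_{\alpha_1})\setminus\bigcup_{\eta\in F^j_{\alpha_1}}h(\eta)$), gives $x^j_{\alpha_0}\notin h_q(\eta)$, hence $q\forces x^j_{\alpha_0}\notin h(\eta)$. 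Thus $q\forces \bar x_{\alpha_0}\in\dot U_{\alpha_1}$. On the other hand $q\le p_{\alpha_1}$ and, since $\alpha_0<\alpha_1$, $p_{\alpha_1}\forces \dot U_{\alpha_1}\cap\{\dot y_\beta:\beta<\alpha_1\}=\emptyset$, so $q\forces \bar x_{\alpha_0}=\dot y_{\alpha_0}\notin\dot U_{\alpha_1}$. As $q$ is a genuine condition forcing a statement and its negation, this is the desired contradiction.

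The routine parts are the reflection and the $\Delta$-system bookkeeping; the real content is packaged in Lemma \ref{lemaAmalgamacaoNova}. The conceptual heart — and the reason the strong property $\Delta$, with its bijection $e$ satisfying $\xi\le e(\xi)$, is needed rather than the plain property $\Delta$ — is that the contradiction requires the lower point $\bar x_{\alpha_0}$ to fall inside the neighborhood $\dot U_{\alpha_1}$ of the strictly later point while simultaneously avoiding the finitely many excluded sets $h(\eta)$, $\eta\in F^j_{\alpha_1}$. The asymmetric amalgamation delivers exactly this: its conclusion $\xi\in h_q(\eta)\Leftrightarrow e(\xi)\in h_2(\eta)$ transports statements about $x^j_{\alpha_0}$ relative to neighborhoods indexed by $b$ into statements about $x^j_{\alpha_1}$, so both the inclusion and the exclusion requirements reduce to facts already decided by $p_{\alpha_1}$. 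I expect the main obstacle to be organizing the refinements so that the $e$-lower condition carries the \emph{smaller} index (via the increasing-blocks step), since this is what makes left-separation bite in the correct direction.
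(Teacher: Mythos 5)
Your overall strategy coincides with the paper's: reflect the names to conditions $p_\alpha$ deciding the points and the finite data of their basic neighborhoods, run the $\Delta$-system, isomorphism-type and $i$-constancy thinnings, apply the strong property $\Delta$ to produce a pair satisfying hypotheses (A) and (B) of Lemma \ref{lemaAmalgamacaoNova}, and use the asymmetric amalgamation to force the earlier point into the later point's neighborhood, contradicting left-separation. There is one genuine difference: the paper proves the theorem by induction on $n$, using the inductive hypothesis to rule out coordinates equal to $*$ and coordinates that repeat, whereas you handle these coordinates directly inside the amalgamation step --- a $*$-coordinate is automatic because $h(\eta)\subseteq L_f$, so $*$ belongs to every basic neighborhood of $*$, and a coordinate that is constant along the family necessarily sits in the root, where $e$ is the identity, so the equivalence $\xi\in h_q(\eta)\Leftrightarrow e(\xi)\in h_2(\eta)$ still transfers exactly the membership and exclusion facts decided by $p_{\alpha_1}$. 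This is correct and removes the induction; it is a small but real simplification.

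However, one step of your thinning fails as written: the claim that one can pass to an uncountable $I$ on which the blocks $a_\alpha\setminus R$ are \emph{strictly increasing}, justified by ``countable subsets of $\omega_1$ are bounded''. The blocks are finite subsets of $\omega_2$, not of $\omega_1$, and such a thinning is in general impossible: if $a_\alpha\setminus R=\{\alpha,\ \omega_1+\alpha\}$ for $\alpha<\omega_1$, then any two blocks interleave, since $\alpha<\beta<\omega_1\leq\omega_1+\alpha<\omega_1+\beta$; so not even two blocks can be separated, let alone uncountably many. What your argument actually needs is weaker, and true: that after thinning, for $\alpha<\beta$ in $I$ the order isomorphism $e_{\alpha\beta}$ is nondecreasing, i.e., $p_\alpha$ is lower than $p_\beta$. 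Equivalently, each of the finitely many functions sending $\alpha$ to the $k$-th element of $a_\alpha\setminus R$ should be increasing along $I$. Since the blocks are pairwise disjoint, each such function is injective, and every injective ordinal-valued function on an uncountable set is increasing on an uncountable subset (take $\lambda$ least such that the preimage of $\lambda$ is uncountable; $\lambda$ must have uncountable cofinality, and one then chooses the desired subset by recursion); applying this successively to the finitely many coordinates, and noting that in the pair delivered by the strong property $\Delta$ the relation $\xi\leq e(\xi)$ then forces the lower set to carry the smaller index, repairs the step. Note that this is precisely the thinning the paper itself asserts without proof (``we can assume \dots $p_\alpha$ is lower than $p_\beta$''), and it is essential in both arguments, since without lower-equals-earlier the amalgamation produces no contradiction with left-separation; so the gap is localized and fixable, but the justification you gave, and the stronger statement it was meant to prove, are false.
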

\begin{proof}
We prove this by induction on $n \in \N$: in $V^{\PP_f}$, fix $n \in \N$ and
suppose that for all $0 \leq i <n$, $K_f^{i}$ is hereditarily separable (take
$K_f^0 = \{*\}$) and let us show that $K_f^n$ is hereditarily separable.
We will be using a well-known fact that a regular space is hereditarily separable
if and only if it has no uncountable left-separated sequence (see Theorem 3.1 of \cite{Roitman}).

In $V$, suppose $(\dot{x}_\alpha)_{\alpha<\omega_1}$ is a sequence of names
such that $\PP_f$ forces that $(\dot{x}_\alpha)_{\alpha< \omega_1}$ is a
left-separated sequence in $K_f^n$ of cardinality $\aleph_1$ and for each
$\alpha<\omega_1$, we have that $\dot{x}_\alpha = (\dot{x}^\alpha_1, \dots,
\dot{x}^\alpha_n)$, where each $\dot{x}_i^\alpha$ is a name for an element of
$K_f$.

Notice that if
       $$\PP_f \Vdash  \exists 1 \leq i \leq n, \ \exists X \subseteq
       \omega_1, \ |X|=\aleph_1 \text{ such that }\forall \alpha, \beta \in X,
       \ \dot{x}_i^\alpha = \dot{x}_i^\beta,$$
then
       $$\PP_f \Vdash \exists 1 \leq i \leq n, \ \exists X \subseteq \omega_1,
       \ |X|=\aleph_1 \text{ such that } ((\dot{x}_1^\alpha, \dots,
       \dot{x}_{i-1}^\alpha, \dot{x}_{i+1}^\alpha, \dots,
       \dot{x}_n^\alpha))_{\alpha  \in X}$$
       $$\text{ is a left-separated sequence in }K_f^{n-1},$$
contradicting the inductive hypothesis. Therefore, we can assume without loss
of generality that $\PP_f$ forces that for all $1 \leq i \leq n$ and all
$\alpha< \beta < \omega_1$, $\dot{x}_i^\alpha \neq \dot{x_i}^\beta$ and
$\dot{x}_i^\alpha \in L_f = K_f \setminus \{*\}$.

By assertion (+) following Definition \ref{defiEspacoGeral}, for each $\alpha<\omega_1$, there are names $\dot{F}^\alpha_1, \dots,
\dot{F}^\alpha_n$  for finite subsets of $\omega_2$ such that $\PP_f$
forces that
       $$\forall \alpha<\omega_1 \quad \forall 1\leq i \leq n \quad
       \dot{x}^\alpha_i \in h(\dot{x}^\alpha_i) \setminus \bigcup_{\xi\in
         \dot{F}^\alpha_i} h(\xi)$$
and
       $$\forall \alpha <\beta < \omega_1\quad \exists 1\leq i \leq n \quad
       \dot{x}^\alpha_i \notin h(\dot{x}^\beta_i) \setminus \bigcup_{\xi\in
       \dot{F}^\beta_i} h(\xi).$$

For each $\alpha<\omega_1$, let $p_\alpha=(D_\alpha, h_\alpha, i_\alpha)\in
\PP_f$, $x^\alpha_1, \dots, x^\alpha_n \in \omega_2$ and $F^\alpha_1, \dots,
F^\alpha_n \subseteq \omega_2$ be finite such that
       $$p_\alpha \Vdash \forall 1\leq i \leq n \quad \dot{x}^\alpha_i =
       \check{x}^\alpha_i    \text{ and } \dot{F}^\alpha_i =
       \check{F}^\alpha_i.$$

By Lemma 2.2 of \cite{JuhaszSoukup}, we can assume without loss of generality that
for all $\alpha<\omega_1$ and all $1\leq i\leq n$, $F^\alpha_i \subseteq
D_\alpha$ and $x^\alpha_i \in D_\alpha$.

By the $\Delta$-system Lemma, we can assume as well that
$(D_\alpha)_{\alpha<\omega_1}$ forms a $\Delta$-system with root $D$. Since for
each pair $\{\xi, \eta\} \subseteq D$ and each $\alpha < \omega_1$, we have
that $i_\alpha(\{\xi, \eta\}) \in [f(\{\xi, \eta\})]^{< \omega}$, we may
assume that for all $\alpha<\beta<\omega_1$, if $\xi, \eta \in D$, $\xi \neq
\eta$, then $i_\alpha(\{\xi, \eta\}) = i_\beta(\{\xi, \eta\})$.

By thinning out, we can assume without loss of generality that
$(D_\alpha)_{\alpha<\omega_1}$ forms a $\Delta$-system with root $D$ such that
for every $\alpha < \beta < \omega_1$:
\begin{itemize}
\item $p_\alpha$ is isomorphic to $p_\beta$;
\item $p_\alpha$ is lower than $p_\beta$;
\item if $e_{\alpha\beta}:D_\alpha \rightarrow D_\beta$ 
is the order-preserving bijective function, then $e_{\alpha\beta}(x_i^\alpha) = x_i^\beta$,
for all $1 \leq i \leq n$.
\end{itemize}

Finally, we may assume that for all $1 \leq i \leq n$ we have: either
$x^\alpha_i = x^\beta_i$ for all $\alpha<\beta<\omega_1$; or $x^\alpha_i
\notin D$ for all $\alpha<\omega_1$ and actually the second case holds by our 
initial assumption about the sequence.

Since $f$ has the strong property $\Delta$, there
are $\alpha<\beta<\omega_1$ such that for all $\zeta \in D$, all $\xi \in
D_\alpha\setminus D$ and all $\eta \in D_\beta \setminus D$:
\begin{enumerate}[(i)]
        \item $D_\alpha \cap \xi \cap \eta \subseteq f(\{\xi, \eta\})$;
        \item if $\zeta < \xi$, then $f(\{\zeta, \eta\}) \subseteq
        f(\{\xi,\eta\})$;
        \item if $\zeta < \eta$, then $f(\{\zeta, \xi\}) \subseteq
        f(\{\xi,\eta\})$.
\end{enumerate}

Note that $p_\alpha$ and $p_\beta$ satisfy the hypothesis of Lemma
\ref{lemaAmalgamacaoNova}. Hence, there is $q\leq p_\alpha, p_\beta$ in
$\PP_f$ such that for all $\xi \in D_\alpha$ and all $\eta \in D_\beta$,
        $$\xi \in h_q(\eta) \text{ if and only if } e_{\alpha\beta}(\xi) \in
        h_{p_\beta}(\eta).$$

Then, for all $1 \leq i \leq n$ and all $\xi \in D_\beta$, we have that
        $$x_i^\alpha \in h_q(\xi) \text{ if and only if } x_i^\beta \in
        h_{p_\beta}(\xi).$$
So we have that
        $$x_i^\alpha \in h_q(x_i^\beta) \setminus \bigcup_{\xi \in F_i^\beta}
        h_q(\xi).$$
But $q \leq p_\alpha, p_\beta$ and then
        $$q \Vdash \forall 1 \leq i \leq n, \ \dot{x}_i^\alpha =
        \check{x}_i^\alpha, \ \dot{x}_i^\beta = \check{x}_i^\beta \text{ and
        }\dot{F}_i^\beta = \check{F}_i^\beta.$$
Therefore, 
        $$q \Vdash \forall 1 \leq i \leq n, \ \dot{x}_i^\alpha =
        \check{x}_i^\alpha \in h(\check{x}_i^\beta) \setminus \bigcup_{\xi
          \in\check{F}_i^\beta} h(\xi) = h(\dot{x}_i^\beta) \setminus
        \bigcup_{\xi \in\dot{F}_i^\beta} h(\xi),$$
contradicting the hypothesis about $\dot{x}_i^\alpha, \dot{x}_i^\beta$ and $\dot{F}_i^\beta$.
\end{proof}

\begin{coro}
It is relatively consistent with ZFC that there is a hereditarily separable compact scattered
space of height $\omega_2$.
\end{coro}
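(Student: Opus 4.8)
The plan is to read off the corollary by combining the consistency result of Section~4 with Theorem~\ref{teoHeredDens2} and the basic properties of $\PP_f$ recorded in Section~1. First I would invoke Section~4 to fix a model $V$ of ZFC in which there is a function $f\colon[\omega_2]^2\to[\omega_2]^{\leq\omega}$ with the strong property $\Delta$ of Definition~\ref{strongDeltaProperty}; this is the sole step that consumes a consistency hypothesis. Working in $V$, I would then form the forcing $\PP_f$ of Definition~\ref{defiForcing} and pass to a generic extension $V^{\PP_f}$, where the candidate space is $K_f$ of Definition~\ref{defiEspacoGenerico}.

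In $V^{\PP_f}$ the three structural requirements are verified as follows. That $K_f$ is compact, scattered and zero-dimensional is exactly Proposition~\ref{propLocalmenteCompacto}. That $K_f$ is hereditarily separable is the instance $n=1$ of Theorem~\ref{teoHeredDens2}, applied with $K_f=K_f^1$; this is legitimate precisely because $f$ has the strong property $\Delta$. Finally, since $\PP_f$ is c.c.c.\ by Theorem~\ref{teoForcingCCC}, it preserves cardinals, so the underlying set $\omega_2\cup\{*\}$ of $K_f$ still has cardinality $\aleph_2$ in $V^{\PP_f}$.

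It remains to compute $ht(K_f)$, and this is the only point needing a short argument. For the lower bound I would use that hereditary separability forces countable width: each Cantor--Bendixson level is a discrete subspace and, being separable as a subspace of a hereditarily separable space, is therefore countable. Since $K_f$ has $\aleph_2$ points, distributed among at most $|ht(K_f)|+1$ levels each of cardinality $\leq\aleph_0$, we get $|ht(K_f)|\cdot\aleph_0\geq\aleph_2$, hence $ht(K_f)\geq\omega_2$. For the upper bound I would use that every generic neighborhood satisfies $h(\xi)\subseteq\xi+1$, since $\max h_p(\xi)=\xi$ in Definition~\ref{defiForcing}; thus $h(\xi)$ is a clopen compact subspace of cardinality at most $|\xi|\leq\aleph_1$, so its height is $<\omega_2$, and as Cantor--Bendixson rank is a local invariant this bounds the rank of $\xi$ in $K_f$ below $\omega_2$. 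Consequently every ordinal point has rank $<\omega_2$, so $K_f^{(\omega_2)}\subseteq\{*\}$ is finite; were $ht(K_f)>\omega_2$ the level $K_f^{(\omega_2)}$ would be infinite, a contradiction. Therefore $ht(K_f)=\omega_2$, with $*$ as the unique top point, and $K_f$ is a thin-very tall space.

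Putting these together, in $V^{\PP_f}$ the space $K_f$ is a hereditarily separable compact scattered space of height $\omega_2$, which proves the asserted relative consistency. I do not expect a genuine obstacle in this corollary: all the real work sits in Theorem~\ref{teoHeredDens2} and in the Section~4 construction of a function with the strong property $\Delta$, both of which may be taken as given. The only mild subtlety is the height computation, which is settled by the elementary bounds above; this also agrees with the fact that, since the strong property $\Delta$ implies the ordinary property $\Delta$, the forcing $\PP_f$ of Rabus, Juh\'asz and Soukup already produces cw($\omega_2$) spaces.
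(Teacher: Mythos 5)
Your proposal is correct and follows essentially the same route as the paper: fix a model with a function having the strong property $\Delta$ (Section 4), force with $\mathbb{P}_f$, apply Theorem \ref{teoHeredDens2} with $n=1$ and Proposition \ref{propLocalmenteCompacto}, then get $ht(K_f)\geq\omega_2$ from the countability of the discrete (hence separable) Cantor--Bendixson levels and $ht(K_f)\leq\omega_2$ from $\bigcap_{\alpha<\omega_2}K_f^{(\alpha)}=\{*\}$. The only difference is that you supply an explicit argument (via $h(\xi)\subseteq\xi+1$, compactness of $h(\xi)$, and locality of Cantor--Bendixson rank on open sets) for the upper bound, which the paper dismisses as ``easy to see.''
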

\begin{proof}
Since each level of the Cantor-Bendixson decomposition of $K_f$ is a discrete subset of $K_f$, it follows that every level of it is countable. But $|K_f|=\aleph_2$ and $K_f = \bigcup_{\alpha<ht(K_f)} K_f^{(\alpha)} \setminus K_f^{(\alpha+1)}$, so that $ht(K_f) \geq \omega_2$. It is easy to see that $\bigcap_{\alpha<\omega_2} K_f^{(\alpha)} = \{*\}$ concluding that $ht(K_f) = \omega_2$.
\end{proof}

\begin{coro}
It is relatively consistent with ZFC that there is a hereditarily separable compact space with
hereditary Lindel\"of degree equal to $\aleph_2$. In particular, it is relatively consistent with ZFC that there is a compact space $K$ such that 
$hL(K) \not\leq hd(K)^+$''.
\end{coro}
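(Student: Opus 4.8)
The plan is to take $K=K_f$ in the model $V^{\PP_f}$, where $f$ is a function with the strong property $\Delta$; the existence of such an $f$ is established in Section 4, and the previous corollary already guarantees that in this model $K_f$ is a hereditarily separable compact scattered space of height $\omega_2$. It remains only to compute the two cardinal functions $hd(K_f)$ and $hL(K_f)$ and to check that they witness the claimed strict inequality. First I would record that $hd(K_f)=\aleph_0$: by Theorem \ref{teoHeredDens2} applied with $n=1$ the space $K_f$ is hereditarily separable, so every subspace has countable density, and since $K_f$ is infinite this gives $hd(K_f)=\aleph_0$ and hence $hd(K_f)^+=\aleph_1$.

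The heart of the matter is to show $hL(K_f)=\aleph_2$. The upper bound is immediate: since $\PP_f$ is c.c.c.\ (Theorem \ref{teoForcingCCC}) cardinals are preserved, so $|K_f|=|\omega_2\cup\{*\}|=\aleph_2$ in $V^{\PP_f}$, and as $hL(X)\leq|X|$ for every space we get $hL(K_f)\leq\aleph_2$. For the lower bound I would exhibit a right-separated sequence of length $\omega_2$, using the standard characterization, dual to the one invoked in the proof of Theorem \ref{teoHeredDens2}, that for regular spaces $hL(X)$ is the supremum of the lengths of right-separated subspaces (see \cite{Roitman}). The natural candidate is the identity enumeration $(\xi)_{\xi<\omega_2}$ of $L_f=\omega_2$. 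The key point is the requirement $\max h_p(\xi)=\xi$ in Definition \ref{defiForcing}, which forces each generic neighborhood to satisfy $h(\xi)\subseteq\xi+1$, i.e.\ every element of $h(\xi)$ is $\leq\xi$. Consequently, for each $\xi<\omega_2$ the initial segment $\{\eta:\eta\leq\xi\}$ is open in $L_f$, since any $\eta\leq\xi$ has the neighborhood $h(\eta)\subseteq\eta+1\subseteq\xi+1$ inside it. Thus every initial segment of $(\omega_2,\in)$ is open, which is exactly the assertion that this enumeration is a right-separated sequence of length $\omega_2$ in $L_f$. Because $L_f$ is an open subspace of its one-point compactification $K_f$ (Definition \ref{defiEspacoGenerico}), each $h(\xi)$ remains open in $K_f$, so the same sequence is right-separated in $K_f$; as its tails witness a Lindelöf degree of at least $\mathrm{cf}(\omega_2)=\aleph_2$, we conclude $hL(K_f)\geq\aleph_2$.

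Combining the two bounds yields $hL(K_f)=\aleph_2$, while $hd(K_f)^+=\aleph_1$. Hence in $V^{\PP_f}$ the space $K_f$ is a hereditarily separable compact space with $hL(K_f)=\aleph_2$, and moreover $hL(K_f)=\aleph_2\not\leq\aleph_1=hd(K_f)^+$, establishing both assertions. I do not expect a genuine obstacle here, as all the substantive work lies in Theorem \ref{teoHeredDens2} and the preceding corollary; the only point requiring care is the right-separation computation, and this rests entirely on the syntactic constraint $\max h_p(\xi)=\xi$ together with the fact that $L_f$ embeds as an open subspace of $K_f$.
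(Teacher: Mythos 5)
Your proposal is correct, and it departs from the paper's proof at exactly one point: the lower bound $hL(K_f)\geq\aleph_2$. The paper gets this in one line from the Cantor--Bendixson structure, observing that $\{K_f\setminus K_f^{(\alpha)}:\alpha<\omega_2\}$ is an open cover of $K_f\setminus\{*\}$ admitting no subcover of smaller cardinality --- an argument that leans on the preceding corollary's computation that $ht(K_f)=\omega_2$ (equivalently, that $\bigcap_{\alpha<\omega_2}K_f^{(\alpha)}=\{*\}$). You instead read off a right-separated sequence of type $\omega_2$ directly from the syntax of the forcing: since every condition satisfies $\max h_p(\xi)=\xi$, indeed $h(\xi)=\bigcup_{p\in G}h_p(\xi)\subseteq\xi+1$, so every initial segment of $L_f=\omega_2$ is open and regularity of $\aleph_2$ defeats any subfamily of size $\leq\aleph_1$ of the cover by initial segments. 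Both arguments ultimately produce an increasing open cover of $L_f$ of length $\omega_2$ with no small subcover, but yours is self-contained at this step (it does not route through the height computation), while the paper's reuses work already done; note also that you only need the trivial direction of the right-separation characterization of $hL$, so the appeal to \cite{Roitman} could be replaced by the two-line subcover argument you in fact sketch. The upper bound $hL(K_f)\leq|K_f|=\aleph_2$ (via the c.c.c.\ of $\PP_f$, Theorem \ref{teoForcingCCC}) and the observation $hd(K_f)^+=\aleph_1$ from Theorem \ref{teoHeredDens2} with $n=1$ match the paper exactly. One cosmetic slip: it is the \emph{initial segments} of your sequence, not its ``tails'', that form the open cover without a small subcover --- the tails are the sets that right-separation excludes from the neighborhoods $h(\xi)$; the fix is immediate and does not affect the argument.
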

\begin{proof}
It follows from the fact that $hL(K_f) \leq |K_f|=\aleph_2$ and that $\{K_f \setminus K_f^{(\alpha)}: \alpha< \omega_2\}$ is an open covering of $K_f \setminus\{*\}$ which does not admit a subcovering of strictly smaller cardinality.
\end{proof}

\begin{coro}
It is relatively consistent with ZFC that there is an Asplund space $C(K)$ of density $\aleph_2$
which does not admit any Fr\'echet smooth renorming and which does not contain an
uncountable biorthogonal system.
\end{coro}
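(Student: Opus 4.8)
The plan is to read off every required Banach-space property of $C(K_f)$ from the purely topological facts already assembled, translating each one through the standard correspondence between compact scattered spaces and their spaces of continuous functions; there is no new forcing or combinatorics to perform here. Thus the first step is to record the topological input. Granting a function $f$ with the strong property $\Delta$ (whose existence is consistent by Section 4), Proposition \ref{propLocalmenteCompacto} together with Theorem \ref{teoHeredDens2} gives, in $V^{\PP_f}$, a compact scattered space $K=K_f$ of height $\omega_2$ such that $K^n$ is hereditarily separable for every $n\in\N$, exactly as in the preceding corollaries. I would fix this model once and for all and set $K=K_f$, $X=C(K)$.

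Next I would settle the Asplund property and the density. Since $K$ is compact and scattered, the Namioka--Phelps theorem \cite{NamiokaPhelps} gives immediately that $C(K)$ is Asplund. For the density, recall that each generic neighborhood $h(\xi)$ is a clopen subset of $K$ with $\max h(\xi)=\xi$, so the $\aleph_2$ sets $h(\xi)$ are pairwise distinct; hence the indicator functions $\chi_{h(\xi)}\in C(K)$ are pairwise at supremum-distance $1$, which forces $d(C(K))\geq\aleph_2$. Since the topology of $K$ has a subbasis of size $\aleph_2$, we also have $w(K)\leq\aleph_2$, and therefore $d(C(K))=w(K)=\aleph_2$. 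This disposes of the density claim.

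The substantive part is the two negative statements, and the whole point is that the hypotheses available are \emph{identical} to those used for the Kunen line: $K$ is compact scattered and all of its finite powers are hereditarily separable. I would therefore reproduce the arguments of Jim\'enez Sevilla and Moreno \cite{JimenezMoreno} in this more general setting. For the absence of an uncountable biorthogonal system, the mechanism is that on a scattered $K$ one has $C(K)^{*}=\ell_1(K)$, so a putative uncountable biorthogonal system $(g_\alpha,\mu_\alpha)_{\alpha<\omega_1}$ consists of functions $g_\alpha\in C(K)$ together with atomic measures $\mu_\alpha$ of countable support; a $\Delta$-system and pressing-down reduction on the finitely many dominant support points turns the biorthogonality relations $\mu_\alpha(g_\beta)=\delta_{\alpha\beta}$ into an uncountable left-separated (equivalently, forbidden discrete) configuration inside some finite power $K^n$, contradicting the hereditary separability of $K^n$. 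The absence of a Fr\'echet smooth renorming follows by the same route, since in the Kunen-line analysis a Fr\'echet smooth renorming of $C(K)$ is shown to manufacture precisely such a forbidden uncountable structure in a finite power of $K$.

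The main obstacle is exactly this last step: there is nothing set-theoretically new, but one must check \emph{carefully} that the Kunen-line arguments invoke only that $K$ is compact scattered with all finite powers hereditarily separable, and no feature special to CH or to the linear order of the Kunen line, so that they transfer verbatim to $K_f$. Once this verification is in place, the corollary is simply the conjunction of the three items above: $C(K)$ is an Asplund space of density $\aleph_2$ admitting neither a Fr\'echet smooth renorming nor an uncountable biorthogonal system.
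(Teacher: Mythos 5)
Your proposal assembles the right ingredients (Namioka--Phelps for Asplundness, weight $\aleph_2$ for density, and the ``Kunen-line'' phenomenon for the two negative statements), but it organizes the crucial step differently from the paper, and in one place it is genuinely underspecified. The paper's proof does not transfer the Kunen-line combinatorics piecemeal; it isolates a single unifying property: since every finite power of $K_f$ is hereditarily separable, $C_p(K_f)$ is hereditarily Lindel\"of (Lemma 4.37 and Theorem 4.38 of \cite{BookBiorthogonal}), and since $K_f$ is compact scattered the pointwise and weak topologies of $C(K_f)$ coincide (Theorem 7.4 of \cite{NegrepontisHBST}); hence $C(K_f)$ is hereditarily Lindel\"of in its weak topology. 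Both negative statements then fall out in two lines: an uncountable biorthogonal system $(x_\alpha,\varphi_\alpha)_{\alpha<\omega_1}$ makes $\{x_\alpha:\alpha<\omega_1\}$ weakly discrete (the weak neighborhood $\varphi_\alpha^{-1}((1/2,\infty))$ of $x_\alpha$ misses every $x_\beta$, $\beta\neq\alpha$), contradicting weak hereditary Lindel\"ofness --- no $\ell_1(K)$ duality, no $\Delta$-system, no normalization is needed. Your direct route for biorthogonal systems (truncate the supports of the measures, form a $\Delta$-system, land a discrete set in $K^n$) can be pushed through, but not as written: one must first normalize $\|g_\alpha\|=1$, and the refinement step is not a ``pressing-down'' argument (the supports are countable sets of points of $K$, not ordinals) but a condensation-point argument for the coefficient data in the separable metric spaces $\ell_1(\text{root})$ and $\mathbb{R}^n$. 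So for that half you have a correct but heavier alternative argument.

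The genuine gap is the Fr\'echet smooth half, where you give no mechanism at all and simply assert that the Kunen-line analysis ``manufactures a forbidden uncountable structure in a finite power of $K$.'' That is not how the argument runs, and it is the one step that cannot be waved at. The paper's chain is: a Fr\'echet smooth renorming yields the Mazur intersection property (Corollary 8.34 of \cite{BookBiorthogonal}, due to Mazur \cite{Mazur}); a nonseparable space with that property contains an uncountable bounded set $A$ with $x_0\notin\overline{conv}(A\setminus\{x_0\})$ for every $x_0\in A$ (Corollary 8.36, due to Jim\'enez Sevilla and Moreno \cite{JimenezMoreno}); and since norm and weak closures of convex sets coincide, such an $A$ is weakly discrete --- contradicting weak hereditary Lindel\"ofness once more. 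If instead you cite \cite{JimenezMoreno} as a black box, you still owe the verification you yourself flag as ``the main obstacle,'' namely that their proof uses only ``compact scattered with all finite powers hereditarily separable''; the paper's explicit chain through the Mazur intersection property is precisely what discharges that obligation.
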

\begin{proof}
Since every finite power of $K_f$ is hereditarily separable,
Lemma 4.37 and Theorem 4.38 of \cite{BookBiorthogonal} imply that $C(K_f)$ is hereditarily
Lindel\"of relative to its pointwise convergence topology. But for compact scattered spaces $K$, the pointwise convergence topology and the weak topology of $C(K)$ coincide (see Theorem 7.4 of \cite{NegrepontisHBST}), so that $C(K_f)$ is hereditarily Lindel\"of relative to its weak topology. 
 

Now, if $C(K_f)$ admits a Fr\'echet smooth renorming, by Corollaries 8.34 (due to Mazur \cite{Mazur}) and 8.36 of \cite{BookBiorthogonal} (due to Jim\'enez Sevilla and Moreno \cite{JimenezMoreno}) it contains an uncountable bounded subset $A$ such that for every $x_0 \in A$, $x_0$ is 
not in the (norm-) closed convex hull of $A \setminus\{x_0\}$, that is, $x_0 \notin \overline{conv}(A \setminus \{x_0\})$. Since the weak and norm convex closures coincide in Banach spaces, $A$ turns out to be an uncountable discrete family of $C(K_f)$ relative to its weak topology, which contradicts the fact that $C(K_f)$ is hereditarily Lindel\"of relative to its weak topology.

Now, if $C(K_f)$ admits an uncountable biorthogonal system 
$(x_\alpha, \varphi_\alpha)_{\alpha<\omega_1} \subseteq C(K_f)
 \times C(K_f)^*$, then $\{x_\alpha: \alpha<\omega_1\}$ is an
 uncountable discrete family of $C(K_f)$ relative to its
 weak topology, contradicting the fact that $C(K_f)$ is hereditarily
 Lindel\"of relative to its weak topology.
\end{proof}

One should compare the above corollary to Theorem 4.41 of \cite{BookBiorthogonal} (due to Ostaszewski \cite{Ostaszewski}) and to Corollary 8.37 of \cite{BookBiorthogonal} (due to Jim\'enez Sevilla and Moreno \cite{JimenezMoreno}). 

\section{The existence of the required function $f$}

In this section we prove the consistency of the existence of
a function with the strong property $\Delta$. It turns out that we are even able to prove
the consistency of the existence of such a function with its range included in the
family of finite (rather than countable) subsets of
$\omega_2$. The method is quite involved but, as shown at the end of this section,
forcings preserving CH (as in \cite{BaumgartnerShelah}) cannot serve for this purpose 
even if we were interested in a function with its range included in countable subsets of $\omega_2$.

\subsection{Forcing with side conditions in Velleman's simplified morasses.}

To construct a forcing which adds the required auxiliary function on pairs of $\omega_2$ we will
need a family of countable subsets of $\omega_2$ with some strong properties. The
following proposition establishes a list of the most useful properties:

\begin{prop}\label{prop1}
It is relatively consistent with ZFC+CH that there exists
a family $\mathcal{F}\subseteq [\omega_2]^{\omega}$ which satisfies the following properties:
\begin{enumerate}[1)]
\item $(\mathcal{F},\subseteq)$ is well-founded (thus, one can talk about $rank(X)$ for $X\in\mathcal{F}$);
\item $\mathcal{F}$ is stationary in $[\omega_2]^{\omega}$ (see \cite{BaumgartnerPFA});
\item If $\alpha\in X,Y\in \mathcal{F}$ and $rank(X)\leq rank(Y)$, then $X\cap\alpha\subseteq Y\cap \alpha$;
\end{enumerate}
 If $M$ is a countable elementary submodel of $H(\omega_3)$ containing $\omega_1,\omega_2, \mathcal{F}$ and $X=M\cap\omega_2\in \mathcal{F}$, then
\begin{enumerate}[1)]\setcounter{enumi}{3}
\item $M\cap\omega_1=rank(X)$;
\item $Y\subset X$, $Y\in \mathcal{F}$ implies $Y\in M$;
\item $X_1,..., X_n\in \mathcal{F}$ for $n\in N$ and $rank(X_i)<rank(X)$ for $1\leq i\leq n$ implies
that there is $Z\in \mathcal{F}$ such that $Z\in M$ and $X\cap(X_1\cup...\cup X_n)\subseteq Z$.
\end{enumerate}
\end{prop}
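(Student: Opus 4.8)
The goal is to construct a family $\mathcal{F}\subseteq[\omega_2]^\omega$ with the listed properties, consistently with ZFC+CH. The natural plan is to build $\mathcal{F}$ by forcing while preserving CH, or—more in the spirit of the subsection title—to obtain $\mathcal{F}$ from a combinatorial object (a simplified morass, or a coherent sequence of elementary submodels) that encodes exactly the rank and coherence structure demanded. My plan is to produce the family from a $\square$-like or morass-like scaffold on $\omega_2$: such a scaffold provides, for each point, an increasing continuous tower of countable approximations with a built-in notion of rank and a strong coherence of initial segments, which is precisely what properties (1) and (3) ask for.

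\smallskip

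\noindent\emph{The approach.} First I would set up the existence of a simplified $(\omega_1,1)$-morass (Velleman), which is consistent with CH, and use its tree of mapped countable structures to define $\mathcal{F}$ as the collection of ``morass points'' $X=M\cap\omega_2$ for countable $M\prec H(\omega_3)$ lying along the morass. The rank function comes from the morass ordering, giving (1) well-foundedness. For (3), the key is that the morass maps are order-isomorphisms that agree below their critical points: if $rank(X)\le rank(Y)$ then $X$ and $Y$ share the same lower skeleton up to the splitting ordinal, which forces $X\cap\alpha\subseteq Y\cap\alpha$ whenever $\alpha\in X\cap Y$; I would verify this directly from the coherence axioms of the morass. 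Property (4), $M\cap\omega_1=rank(X)$, should be arranged by identifying the rank with the ordinal height $M\cap\omega_1$, which is exactly how the morass levels are indexed.

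\smallskip

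\noindent\emph{The elementarity-driven properties (5) and (6)} are where elementarity of $M$ does the work. For (5): if $Y\subset X$ and $Y\in\mathcal{F}$, then $Y$ has rank strictly below $rank(X)=M\cap\omega_1$, so $Y$ is ``coded'' below the height of $M$; since $\mathcal{F}\in M$ and $Y$ is determined by parameters in $M\cap\omega_2=X\supseteq Y$, elementarity yields $Y\in M$. For (6): given finitely many $X_1,\dots,X_n$ of rank below $rank(X)$, I would use stationarity (2) and elementarity to find inside $M$ a single $Z\in\mathcal{F}$ absorbing $X\cap(X_1\cup\cdots\cup X_n)$; the point is that this intersection is a countable subset of $X$ built from lower-rank data, so a closure/reflection argument inside $M$ produces the required $Z\in M$ with $X\cap(X_1\cup\cdots\cup X_n)\subseteq Z$. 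Property (2), stationarity of $\mathcal{F}$ in $[\omega_2]^\omega$, I would get from the fact that the morass points are cofinal and closed enough to meet every club, which is a standard feature of morass/elementary-submodel constructions.

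\smallskip

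\noindent\emph{The main obstacle} is reconciling the rigid coherence required by (3)—a uniform end-extension property across \emph{all} pairs of equal-or-comparable rank—with the richness required by (2) and (6), namely that $\mathcal{F}$ be large (stationary) and closed under taking suitable lower-rank unions. These pull in opposite directions: strong coherence tends to thin the family, while stationarity and the amalgamation in (6) demand abundance. The simplified morass is precisely the device designed to hold both at once, so the delicate part of the argument will be checking that the morass's coherence maps deliver (3) and (5) \emph{simultaneously} with the reflection needed for (6), and that all of this survives in a model of CH. I expect the bulk of the real work to be in verifying (3) and (6) from the morass axioms, with (1), (2), (4), (5) following more routinely once the scaffold is in place.
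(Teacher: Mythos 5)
Your choice of scaffold---Velleman's simplified $(\omega_1,1)$-morass---is the same as the paper's, and your treatment of properties 1), 3), 4), 5), 6) is in the right spirit (the paper verifies them by citing Definition 2.1 and Facts 2.5--2.8 of \cite{KoszmiderSCUF}, where the morass is viewed precisely as a well-founded, directed family $\mathcal{F}\subseteq[\omega_2]^\omega$). But there is a genuine gap at property 2): you dismiss stationarity as ``a standard feature of morass/elementary-submodel constructions,'' and this is exactly the point that is \emph{not} standard and does not follow from the morass axioms. A simplified morass is a directed, cofinal family of countable sets, but cofinality in $([\omega_2]^\omega,\subseteq)$ does not imply stationarity: nothing in the morass axioms prevents a club in $[\omega_2]^\omega$ from missing $\mathcal{F}$ entirely. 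The paper must \emph{assume} in addition that the morass is a stationary coding set in the sense of Zwicker \cite{Zwicker}, and the consistency (with CH) of the existence of such a morass is a separate, nontrivial forcing result, quoted from \cite{KoszmiderSemimorasses} (Theorem 3, Section 2). This is why the proposition is a consistency statement rather than a consequence of ``ZFC + CH + there is a simplified morass'' (which holds in $L$): the stationarity is the extra ingredient your argument never actually secures.

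This gap interacts with an ambiguity in your definition of $\mathcal{F}$: you take the traces $M\cap\omega_2$ of countable elementary submodels ``lying along the morass.'' If you take all elementary-submodel traces, you get stationarity for free but lose well-foundedness and the coherence property 3); if you take only the morass elements themselves (as the paper does), coherence holds but stationarity is precisely the missing assumption above---you cannot have both without the stationary coding set hypothesis. Two smaller points: your argument for 5) (``$Y$ is determined by parameters in $M\cap\omega_2$, so elementarity yields $Y\in M$'') is too quick, since a countable subset of a countable model is essentially never an element of it for cardinality reasons alone; the real proof (Fact 2.6 of \cite{KoszmiderSCUF}) recovers $Y$ from the morass coherence at ranks below $M\cap\omega_1$. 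And for 6) the paper does not invoke stationarity at all: it applies Fact 2.8 of \cite{KoszmiderSCUF} to each $X_i$ separately to get $Z(X_i)\in M\cap\mathcal{F}$ with $X\cap X_i\subseteq Z(X_i)$, and then uses elementarity of $M$ together with the \emph{directedness} of the morass to amalgamate these into a single $Z\in M$.
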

\begin{proof}
We will prove that a simplified Velleman's $(\omega_1,1)$-morass (see \cite{Velleman2})
which is a stationary coding set (see \cite{Zwicker})
satisfies the above properties. The proof relies heavily on the properties of Velleman's
morasses obtained in \cite{KoszmiderSCUF}. We will often refer to this paper, in particular we adopt 
definitions of simplified morass and stationary coding set from this paper (section 2).
The consistency of the existence of such morasses can be immediately
obtained from the corresponding proof for semimorasses in \cite{KoszmiderSemimorasses}, Theorem 3 Section 2.
 
1) follows from Definition 2.1 of \cite{KoszmiderSCUF} and 2) from the fact that
$\mathcal{F}$ is assumed to be a stationary coding set. To prove 3) apply 2.5 of \cite{KoszmiderSCUF}.
Now 4) is Fact 2.7 of \cite{KoszmiderSCUF},
5) is Fact 2.6 of \cite{KoszmiderSCUF} To obtain 6) apply Fact 2.8 of \cite{KoszmiderSCUF} to each
$X_i$ obtaining $Z(X_i)$ such that $Z(X_i)\in M\cap \mathcal{F}$ and $X_i\cap X\subseteq Z(X_i)$.
Now use the elementarity of $M$ and the directedness of $\mathcal{F}$ (see Definition 2.1. of \cite{KoszmiderSCUF})
to obtain $Z$ as in 6). 
\end{proof}

Now we will adopt a few facts from \cite{KoszmiderUMSUF} and \cite{KoszmiderSCUF} concerning
forcing with side conditions in $\mathcal{F}$. As explained in these papers,
to use  elements of $\mathcal{F}$ as side conditions means to use forcings $P$ whose 
conditions are of the form $(p, A)$ where $p$ is a finite condition of a natural forcing 
adding the structure in question and $A$ is a finite subset of $\mathcal{F}$. This
is like using models as side conditions in the method of forcing with models as side 
conditions developed by Todorcevic (see \cite{TodorcevicProper}). The order is given by the forcing order on the first 
coordinate and inverse inclusion on the second coordinate. In addition we require the
existence of some natural projections of $p$ onto the elements of $A$ as a part of the 
definition of the forcing notion. The properties 1) - 6) above allow us to perform many 
maneuvers with ease; also the definitions are simpler. This method appears to be equivalent 
to the variant of Todorcevic's method where one employs matrices of models (see 
\cite{TodorcevicDirectedSets} Section 4, for an example with detailed definitions).
The price we need to pay for this convenience is that $P$ is not proper (unlike 
Todorcevic's forcings,) but only $\mathcal{F}$-proper, i.e., there is a club 
$\mathcal{C} \subseteq[\omega_2]^{\omega}$ such that for models
$M\prec H(\omega_3)$ such that $M\in \mathcal{F}\cap \mathcal{C}$
and $p\in P\cap M$, there are $(P,M)$-generic conditions stronger than
$p$. As $\mathcal{F}$ may be assumed to be stationary, $\mathcal{F}$-properness implies the 
preservation of $\omega_1$ (proof as for proper forcings, see \cite{BaumgartnerPFA}). The 
preservation of bigger cardinals follows from the $\omega_2$-chain condition. Note
that the fact that the forcing is not proper but
preserves cardinals is no limitation in the applications that one seeks here, i.e.,
consistent existence of structures of sizes bigger than $\omega_1$.
Let us describe basic notions related to  forcing with side conditions in $\mathcal{F}$ that 
we will use.

\begin{defi}\label{defiFproper} 
Suppose $\mathcal{F}\subseteq[\omega_2]^\omega$.
We say that a forcing notion $P$ is $\mathcal{F}$-proper if there is $\theta>(2^{|P|})^+$ and a club set 
$\mathcal{C}\subseteq[H(\theta)]^\omega$  such that
whenever $p\in M\in \mathcal{C}$ and $M\cap\omega_2\in\mathcal{F}$ then there is
a $(P,M)$-generic $p_0\leq p$, i.e., 
$D\cap M$ is predense below $p_0$ for every $D\in M$ which is dense in $P$.
\end{defi}

\begin{fato}\label{fato3}
Suppose $\mathcal{F}\subseteq[\omega_2]^\omega$
is a stationary set and $P$ is an $\mathcal{F}$-proper forcing notion, then $P$ preserves 
$\omega_1$.
\end{fato}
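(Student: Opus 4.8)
The plan is to mimic the classical proof that proper forcings preserve $\omega_1$, but quantifying only over countable elementary submodels whose trace on $\omega_2$ lies in $\mathcal{F}$, and to exploit the stationarity of $\mathcal{F}$ to guarantee that enough such submodels are available.

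First I would fix $\theta>(2^{|P|})^+$ and a club $\mathcal{C}\subseteq[H(\theta)]^\omega$ witnessing that $P$ is $\mathcal{F}$-proper. Suppose towards a contradiction that $P$ does not preserve $\omega_1$; then there are a condition $p\in P$ and a $P$-name $\dot f$ such that $p\Vdash\dot f:\omega\to\omega_1$ is onto. The aim is to produce $p_0\leq p$ forcing the range of $\dot f$ to be bounded below $\omega_1$, which is the desired contradiction.

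The crucial step is to find a countable $M\prec H(\theta)$ with $p,\dot f,P,\mathcal{F},\omega_1,\omega_2\in M$, with $M\in\mathcal{C}$, and with $M\cap\omega_2\in\mathcal{F}$. This is exactly where the hypothesis that $\mathcal{F}$ is stationary enters: since $\mathcal{F}$ is stationary in $[\omega_2]^\omega$, the set $\{N\in[H(\theta)]^\omega:N\cap\omega_2\in\mathcal{F}\}$ is stationary in $[H(\theta)]^\omega$, because the preimage of a stationary set under the projection $N\mapsto N\cap\omega_2$ is stationary. Intersecting this stationary set with the club $\mathcal{C}$ and with the club of submodels containing the fixed parameters produces the required $M$. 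Applying $\mathcal{F}$-properness to $p\in M\in\mathcal{C}$ with $M\cap\omega_2\in\mathcal{F}$ then yields a $(P,M)$-generic condition $p_0\leq p$.

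Finally I would show $p_0\Vdash\operatorname{ran}(\dot f)\subseteq M\cap\omega_1$. Since $\omega_1\in M\prec H(\theta)$, the set $\delta:=M\cap\omega_1$ is a countable ordinal. For each $n\in\omega$ the set $D_n=\{r\in P:\exists\gamma\ (r\Vdash\dot f(n)=\gamma)\}$ is dense and belongs to $M$, so by $(P,M)$-genericity $D_n\cap M$ is predense below $p_0$. If $r\in D_n\cap M$ and $r\Vdash\dot f(n)=\gamma$, then the unique such $\gamma$ is definable from $r,\dot f,n$ and hence lies in $M$, so $\gamma<\delta$. Thus in any generic $G$ with $p_0\in G$ the predense set $D_n\cap M$ meets $G$, giving $\dot f(n)^G<\delta$; therefore $p_0\Vdash\dot f(n)\in\delta$ for every $n$, whence $p_0\Vdash\operatorname{ran}(\dot f)\subseteq\delta<\omega_1$, contradicting that $p_0\leq p$ forces $\dot f$ to be onto $\omega_1$. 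The only genuinely nontrivial point, and the main obstacle, is the lifting of stationarity from $[\omega_2]^\omega$ to $[H(\theta)]^\omega$ needed to secure a submodel $M$ with $M\cap\omega_2\in\mathcal{F}$; the remainder is the routine properness bookkeeping and is insensitive to the restriction to $\mathcal{F}$.
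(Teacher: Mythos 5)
Your proof is correct and is exactly what the paper has in mind: the paper's ``proof'' of this Fact is just a citation of Shelah's standard argument that proper forcings preserve $\omega_1$, and your write-up carries out precisely that argument, with the only $\mathcal{F}$-specific ingredient being the standard lifting of stationarity from $[\omega_2]^\omega$ to $[H(\theta)]^\omega$ (which the paper itself uses implicitly, e.g.\ in Fact \ref{fato7}, to find $M\prec H(\theta)$ with $M\cap\omega_2\in\mathcal{F}$). The remaining details (deciding $\dot f(n)$ on dense sets from $M$, and $M\cap\omega_1$ being an ordinal) are the routine properness bookkeeping, as you say.
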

\begin{proof}
The proof is a straightforward version of Shelah's paradigmatic
proof of preservation of $\omega_1$ by proper forcings (see 
\cite{ShelahProperForcing} or \cite{BaumgartnerPFA}). 
\end{proof}

The following definition and lemmas are formulations of well-known
techniques (originated in Shelah's use of elementary submodels in forcing)
and will simplify our further arguments.

\begin{defi}\label{defiWellReflects}
Let $P$ be a notion of forcing, $q\in P$ and let $\theta>(2^{|P|})^+$. Suppose $M\prec H(\theta)$ and 
$P, \pi_1, ...,\pi_k \in M$. We say that a formula $\phi(x_0, x_1, ..., x_k)$
{\rm well reflects $q$ in $(M;\pi_1, ...,\pi_k)$} whenever the following are 
satisfied:
\begin{enumerate}[i)]
\item $\phi(q,\pi_1,...,\pi_k)$ holds in $H(\theta)$;
\item whenever $s\in M$ is such that $\phi(s,\pi_1,...,\pi_k)$ holds in $M$, then
$q$ and $s$ are compatible.
\end{enumerate}
\end{defi}

\begin{defi}\label{defiSimplyFproper}
Suppose $\mathcal{F}\subseteq [\omega_2]^\omega$ and suppose $P$ is a notion 
of forcing. We say that $P$ is {\rm simply $\mathcal{F}$-proper} if there is $\theta$ 
such that whenever
\begin{enumerate}[a)]
\item $p\in P$,
\item $M\prec H(\theta)$, $M$ countable,
\item $p,\ P,\  \mathcal{F}\in M$,
\item $M\cap\omega_2\in \mathcal{F}$,
\end{enumerate}
then there is $p_0\leq p$ such that if $q\geq p_0$, then  
there are  $\pi_1, ...,\pi_k \in M$ and a formula $\phi(x_0, x_1,..., x_k)$
which {well reflects $q$ in $(M, \pi_1, ...,\pi_k)$}.
\end{defi}

\begin{lema}\label{lema6}
If $P$ is simply $\mathcal{F}$-proper, then $P$ is $\mathcal{F}$-proper.
\end{lema}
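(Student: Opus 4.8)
The plan is to show that the condition $p_0$ supplied by simple $\mathcal{F}$-properness is in fact $(P,M)$-generic, so that the same $\theta$ together with the club of countable elementary submodels witnesses $\mathcal{F}$-properness. First I would fix the $\theta$ given by Definition \ref{defiSimplyFproper} (enlarging it if necessary so that $\theta>(2^{|P|})^+$) and set $\mathcal{C}=\{M\in[H(\theta)]^\omega: M\prec H(\theta)\text{ and }P,\mathcal{F}\in M\}$, which is a club in $[H(\theta)]^\omega$. Given $M\in\mathcal{C}$ with $M\cap\omega_2\in\mathcal{F}$ and a condition $p\in P\cap M$, the hypotheses (a)--(d) of Definition \ref{defiSimplyFproper} all hold, so there is $p_0\le p$ carrying the reflection property. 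It then remains to verify that $p_0$ is $(P,M)$-generic, i.e. that $D\cap M$ is predense below $p_0$ for every dense $D\in M$.

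To this end fix a dense $D\in M$ and an arbitrary extension $q\le p_0$; I must produce $s\in D\cap M$ compatible with $q$. Since $D$ is dense, choose $r\le q$ with $r\in D$; then $r$ is again an extension of $p_0$, so applying the reflection clause of Definition \ref{defiSimplyFproper} to $r$ yields parameters $\pi_1,\dots,\pi_k\in M$ and a formula $\phi$ that well reflects $r$ in $(M;\pi_1,\dots,\pi_k)$ in the sense of Definition \ref{defiWellReflects}.

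The key move is to absorb membership in $D$ into the reflecting formula. I would consider $\psi(x_0,x_1,\dots,x_k,y)\equiv \phi(x_0,x_1,\dots,x_k)\wedge x_0\in y$ and take $D$ (which lies in $M$) as the value of $y$. By clause (i) of Definition \ref{defiWellReflects} and $r\in D$, the sentence $\exists x_0\,\psi(x_0,\pi_1,\dots,\pi_k,D)$ holds in $H(\theta)$; since $\pi_1,\dots,\pi_k,D\in M\prec H(\theta)$, it also holds in $M$, giving some $s\in M$ with $M\models\phi(s,\pi_1,\dots,\pi_k)$ and $M\models s\in D$. As $D\in M$, the latter yields $s\in D\cap M$, while clause (ii) applied to this $s$ gives that $r$ and $s$ are compatible; since $r\le q$, also $q$ and $s$ are compatible. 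This proves that $D\cap M$ is predense below $p_0$, hence that $p_0$ is $(P,M)$-generic, and therefore that $P$ is $\mathcal{F}$-proper.

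The only genuinely delicate points I anticipate are bookkeeping ones rather than conceptual obstacles. One must pass to an extension $r\in D$ of $q$ \emph{before} invoking reflection, because $\phi$ is required to hold of a condition already lying in $D$; reflecting $q$ itself would not let me conclude $s\in D$. One then has to check that the augmented formula $\psi$ still well reflects $r$: clause (i) is immediate from $r\in D$, and clause (ii) is inherited from $\phi$ because the extra conjunct $x_0\in D$ only further restricts the admissible witnesses $s$. Finally I would keep the forcing-order convention fixed throughout so that ``extension of $p_0$'' consistently denotes the conditions to which the reflection clause of Definition \ref{defiSimplyFproper} applies; with that fixed, every step uses nothing beyond the elementarity of $M$ and the two clauses of well reflection.
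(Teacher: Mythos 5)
Your proposal is correct and follows essentially the same argument as the paper: show that the condition $p_0$ provided by simple $\mathcal{F}$-properness is $(P,M)$-generic by passing from $q\leq p_0$ to an extension lying in the dense set $D$ (the paper phrases this as ``w.l.o.g.\ $q\in D$''), invoking the well-reflecting formula for that extension, and using elementarity of $M$ to find $s\in D\cap M$ satisfying the formula, whence clause (ii) of well reflection gives compatibility. Your extra bookkeeping (the explicit club $\mathcal{C}$, the conjunct $x_0\in y$, and noting that compatibility with $r\leq q$ yields compatibility with $q$) only spells out steps the paper leaves implicit.
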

\begin{proof}
We will prove that whenever $M,\ p$ are as in a) - d)
of Definition \ref{defiSimplyFproper}, then $p_0$ is a $(P,M)$-generic condition.
Let $D\in M$ be dense, we will show that $D\cap M$ is predense below $p_0$. Let
$q\leq p_0$, we may w.l.o.g. assume that $q\in D$.
Let $\pi_1, ...,\pi_k \in M$  and $\phi(x_0, x_1,..., x_k)$
be such that $\phi(x_0, x_1,..., x_k)$ well  reflects $q$ in $(M, \pi_1, ...,\pi_k)$.
By i) of Definition \ref{defiWellReflects}, we have $\phi(q,\pi_1,...\pi_k)$ in 
$H(\theta)$. By its elementarity, $M$ satisfies the formula
``$\exists x\in P\  \phi(x,\pi_1,...\pi_k)\  \&\ x\in D$".
So let $s\in M$ witness this fact. Now by Definition \ref{defiWellReflects}.ii),
$s$ and $q$ are compatible, so $D\cap M$ contains a condition compatible with $q$ which 
proves that $D\cap M$ is predense below $q$ which completes the proof.
\end{proof}

\subsection{Adding a function with the strong property $\Delta$.}

Fix a family $\mathcal{F}\subseteq [\omega_2]^\omega$ satisfying 1) - 6) of Proposition \ref{prop1}.
We will assume familiarity of the reader with elementary submodels of structures
$H(\theta)$. In particular we will make use of facts such as that countable elements
of such models are their subsets or that such models contain $\omega$. See \cite{Dow2}
for more on this subject.
We consider the following forcing $P$ whose conditions
$p$ are of the form: $p=(a_p,f_p, A_p)$ where
\begin{enumerate}[a)]
\item $a_p\in [\omega_2]^{<\omega}$;
\item $f_p: [a_p]^2\rightarrow [\omega_2]^{<\omega}$;
\item $A_p\in[\mathcal{F}]^{<\omega}$;
\item $f_p(\alpha,\beta)\subseteq \bigcap\{X: X\in A_p, \ 
 {\alpha,\beta}\in X\}\cap\min\{\alpha,\beta\}$ for any distinct $\alpha,\beta\in a_p$.
\end{enumerate}

The order is just the inverse inclusion, i.e., $p\leq q$ if and only if
$a_p\supseteq a_q$, $f_p\supseteq f_q$, $A_p\supseteq A_q$.

\begin{fato}\label{fato7}
$P$ is simply $\mathcal{F}$-proper.
\end{fato}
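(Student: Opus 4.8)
The plan is to verify that $P$ is simply $\mathcal{F}$-proper by exhibiting, for a given condition $p \in M$ with $M \cap \omega_2 = X \in \mathcal{F}$, an explicit extension $p_0 \leq p$ together with a reflecting formula for every $q \leq p_0$. The natural candidate for $p_0$ is to add $X$ to the side conditions: set $p_0 = (a_p, f_p, A_p \cup \{X\})$. First I would check that $p_0 \in P$. The only nontrivial requirement is condition (d), which for the newly added side condition $X$ demands $f_p(\alpha,\beta) \subseteq X \cap \min\{\alpha,\beta\}$ whenever $\alpha,\beta \in a_p \cap X$. Since $p \in M$ and $M$ is an elementary submodel containing $\mathcal{F}$, the finite sets $a_p$, $f_p$ are all subsets of $M$, hence of $X = M \cap \omega_2$; in particular $f_p(\alpha,\beta) \subseteq M \cap \omega_2 = X$, which gives exactly what (d) needs. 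So $p_0$ is a legitimate condition below $p$.

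Next I would take an arbitrary $q \leq p_0$ and produce the witnessing data $\pi_1,\dots,\pi_k \in M$ and a formula $\phi$ that well reflects $q$ in $(M;\pi_1,\dots,\pi_k)$. The idea is standard: $q$ typically involves new ordinals above $X$, so I want $\phi$ to describe the ``$M$-part'' of $q$ together with enough structural information that any $s \in M$ satisfying $\phi$ in $M$ can be amalgamated with $q$. The parameters $\pi_j$ will encode the trace $q \upharpoonright M$, namely the values $a_q \cap X$, the restriction of $f_q$ to pairs inside $X$, and the side conditions of $q$ that lie in $M$ (which, using property 5) of Proposition \ref{prop1}, are exactly those $Y \in A_q$ with $Y \subseteq X$). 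The formula $\phi(x,\pi_1,\dots,\pi_k)$ should assert that $x$ is a condition whose intersection data with $X$ matches these parameters and whose side conditions above the common root behave coherently.

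The crux — and the step I expect to be the main obstacle — is verifying clause (ii) of Definition \ref{defiWellReflects}: that any $s \in M$ with $\phi(s,\dots)$ holding in $M$ is compatible with $q$. Here I would form the common extension $r$ with $a_r = a_s \cup a_q$, $A_r = A_s \cup A_q$, and $f_r$ extending both $f_s$ and $f_q$, choosing the values $f_r(\alpha,\beta)$ on genuinely new pairs $\{\alpha,\beta\}$ (one element from $a_s \setminus X$, one from $a_q \setminus X$, say) to be small enough — typically the empty set or $a_s \cap a_q \cap \min\{\alpha,\beta\}$ — so as to satisfy condition (d). The content of (d) for these mixed pairs requires that the chosen value land inside every side condition containing both $\alpha$ and $\beta$; the key point is that for $Y \in A_s \cap M$ and $Z \in A_q$ with $Z \not\subseteq X$, the morass coherence properties 3) and 6) of Proposition \ref{prop1} control how $Y \cap Z$ and $X \cap (\text{side conditions})$ interact, guaranteeing the common root behaves well and no side condition is violated. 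I would invoke property 6) to absorb the relevant intersections into a single $Z \in M \cap \mathcal{F}$, thereby ensuring all the inclusion requirements of (d) hold for $r$, so that $r \leq s, q$ witnesses compatibility. Once (i) and (ii) are established, Lemma \ref{lema6} upgrades simple $\mathcal{F}$-properness to $\mathcal{F}$-properness, completing the argument.
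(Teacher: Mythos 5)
Your choice of $p_0=(a_p,f_p,A_p\cup\{X_0\})$ and your amalgamation (with empty values on mixed pairs) match the paper's proof, but there is a genuine gap at the crucial step, namely the reflecting formula. First, a formula to be evaluated in $M$ can only use parameters lying in $M$, and $X_0=M\cap\omega_2\notin M$, so ``the intersection data of $x$ with $X$ matches the parameters'' is not a legitimate formula; what is expressible is ``$x$ extends $q|M$'', where $q|M=(a_q\cap M,\ f_q|_{[a_q\cap M]^2},\ A_q\cap M)$. Second, and more seriously, that clause (plus your unspecified ``coherence'' clause) does not imply compatibility with $q$, and your plan to invoke property 6) of Proposition \ref{prop1} only at the amalgamation stage cannot work. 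The problematic pairs are not the mixed ones but pairs $\{\alpha,\beta\}\subseteq a_s$ whose two points both lie in some side condition $Y\in A_q$ with $rank(Y)<\delta=M\cap\omega_1$ and $Y\notin M$. For such a pair any common extension $r$ must satisfy $f_r(\{\alpha,\beta\})=f_s(\{\alpha,\beta\})$, and clause d) of the definition of $P$ then demands $f_s(\{\alpha,\beta\})\subseteq Y$; but $s$ is chosen inside $M$, which does not know $Y$, so nothing forces this. Concretely, if $Y\in A_q$, $rank(Y)<\delta$, $Y\notin M$ and $Y\cap X_0$ is infinite, elementarity produces $s\in M$ extending $q|M$ with two new points $\alpha,\beta\in Y\cap X_0$ and $f_s(\{\alpha,\beta\})=\{\gamma\}$ for some $\gamma\in X_0\cap\min\{\alpha,\beta\}$ with $\gamma\notin Y$; such an $s$ satisfies any trace-matching formula, yet is incompatible with $q$. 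So clause ii) of Definition \ref{defiWellReflects} fails for your $\phi$, and no choice of $f_r$ on mixed pairs can repair it.

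The missing idea is that property 6) of Proposition \ref{prop1} must be used \emph{before} writing the formula, not after: fix $Z\in\mathcal{F}\cap M$ with $\bigcup\{Y\cap M:\ Y\in A_q,\ rank(Y)<\delta\}\subseteq Z$, make $Z$ a parameter of $\phi$, and let $\phi$ assert that $x$ extends $q|M$ in $P$ \emph{and} $a_x\setminus a_{q|M}$ is disjoint from $Z$. This holds of $q$ itself in $H(\theta)$, since the new points of $a_q$ lie outside $M\supseteq Z$; and for any $s\in M$ satisfying it, a pair $\{\alpha,\beta\}\subseteq a_s$ contained in a low-rank $Y\in A_q$ is forced into the root $a_{q|M}$ (because $\alpha,\beta\in Y\cap M\subseteq Z$), reducing to clause d) for $q$, while pairs contained in a $Y$ of rank $\geq\delta$ are handled by $f_s(\{\alpha,\beta\})\subseteq X_0$ together with property 3) of Proposition \ref{prop1}. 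With this strengthened formula, your amalgamation with empty values on new pairs goes through exactly as you describe.
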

\begin{proof}
Let $\theta=\omega_3$ and let $M$ and $p$ be as in a) - d) of Definition \ref{defiSimplyFproper}.
The existence of such an $M$ follows from the stationarity of $\mathcal{F}$. Let 
$X_0=M\cap\omega_2$. Let $p_0= (a_p, f_p, A_p\cup\{X_0\})$. Finally let $q\leq p_0$. The
proof consists of using Lemma \ref{lema6} and finding the parameters
$\pi_1, ...,\pi_k \in M$ and a formula $\phi(x_0, x_1,..., x_k)$
which well reflects $q$ in $(M, \pi_1, ...,\pi_k)$.

\vspace{6pt}

Define $q|M=(a_q\cap M, f_q|M, A_q\cap M)$. Introduce notation
$\delta=M\cap \omega_1=rank(M)$, where the second equality follows from 4) 
of Proposition \ref{prop1}. Note that $A_q\cap M=A_{q|M}= \{X\in A_q: X\subset X_0\}$. 
This follows from 5) of Proposition \ref{prop1}. The fact that $[M]^{<\omega}\subseteq M$
implies that $a_{q|M}, A_{q|M}\in M$. Also as d) of the definition
of the forcing is satisfied for $q$ and $\alpha, \beta \in a_q$, we have that 
$f_q(\alpha, \beta) \subseteq X_0 = M \cap \omega_2$ for $\alpha, \beta \in a_q \cap X_0$. 
So, we may conclude that
$f_{q|M} \in M$, in other words we have $q|M\in M\cap P$.
It is clear that $q|M\leq p$. By 6) of Proposition \ref{prop1} and the fact that 
$[M]^{<\omega}\subseteq M$, in $M$ there is a $Z\in\mathcal{F}$
such that $\bigcup\{X\cap M:\ rank(X)<\delta,\ X\in A_q\}\subseteq Z$.
Let $\phi(x_0, x_1, x_2, x_3, x_4)$ be the formula which says that
$x_0$ is a condition of the partial order $x_4$ which extends in $x_4$
the condition $x_3$ and such that the difference between
the first coordinate of $x_0$ and $x_2$ is disjoint from $x_1$.


\vspace{6pt}

\noindent \textbf{Claim.} \textit{$\phi(x_0, x_1, x_2, x_3, x_4)$ well-reflects
$q$ in $(M, Z, a_{q|M}, q|M, P )$.}

\vspace{6pt}

\noindent \textit{Proof of the Claim.}
It is clear that $\phi(q, Z, a_{q|M},q|M, P)$ holds in $H(\omega_3)$.
Now let $s\in M$ be a condition satisfying $\phi(s,Z,a_{q|M},q|M, P )$
i.e., $s$ extends in $P$ the condition $q|M$ and $a_s\setminus a_{q|M}$
is disjoint from $Z$. Define the common extension $r$ of $q$ and $s$
as follows: $a_r=a_s\cup a_q$, $f_r=f_s\cup f_q\cup h$,
$A_r=A_s\cup A_q$, where $h(\{\alpha,\beta\})=\emptyset$ for $\{\alpha,\beta\}
\in [a_s\cup a_q]^2-([a_s]^2\cup [a_q]^2)$. 
Such an $f_r$ is a function on $[a_r]^2$ since $q|M\geq q,s$.
Clearly all clauses of the definition of the forcing $P$ but d) are 
trivially satisfied by $r$. So let us prove d). Let $\alpha,\beta\in a_r$
and $X\in A_r$, we will consider a few cases.

\vspace{0.3cm}

\noindent \textit{Case 1.} $\alpha,\beta\in a_s$,
$X\in A_s$ 

It is trivial because $s\in P$.

\vspace{0.3cm}

\noindent \textit{Case 2.} $\alpha,\beta\in a_q$, $X\in A_q$ 

It is trivial because $q\in P$.

\vspace{0.3cm}

\noindent \textit{Case 3.} $\alpha,\beta\in a_s$, $X\in A_q$. 

Since $\phi(s,Z,a_{q|M},q|M, P )$ holds in $M$ we have that
either $rank(X)\geq\delta=rank(M\cap \omega_2)=rank(X_0)$ in which case
d) is satisfied because $f_r(\{\alpha,\beta\})=
f_s(\{\alpha,\beta\})\subseteq X_0\cap \min\{\alpha,\beta\}\subseteq X\cap\min\{\alpha,\beta\}$ by
d) for $s$ and 3) of Proposition \ref{prop1} or
$rank(X)<\delta$ and then by the definition of $\phi$ and $Z$
we get that $\alpha,\beta\in a_s\cap a_q$, so we are again in Case 2.

\vspace{0.3cm}

\noindent \textit{Case 4.} $\alpha,\beta\in a_q$, $X\in A_s$.

This means that $\alpha,\beta\in M$, because $s\in M$,
i.e., $\alpha,\beta\in  a_s\cap a_q$ so we are again in Case 1.

\vspace{0.3cm}

\noindent \textit{Case 5.} $\alpha\in a_s\setminus a_q$ and $\beta \in a_q\setminus a_s$.

Then $h(\{\alpha,\beta\})=\emptyset$.

\vspace{0.3cm}

The proof of the claim completes the proof of Fact \ref{fato7}.
\end{proof}

\begin{defi}\label{defiSupp}
For $p\in P$, call the set $a_p\cup f[[a_p]^2]\cup\bigcup A_p$ the support of $p$ and denote it by $supp(p)$.
\end{defi}

\begin{defi}\label{defiIsomCond}
We say that two conditions $p, q$ of $P$ are isomorphic (via $\pi:supp(p)\rightarrow supp(q)$)
if $\pi:supp(p)\rightarrow supp(q)$ is an order preserving bijection constant on 
$supp(p)\cap supp(q)$ and 
\begin{enumerate}[i)]
\item $\pi[a_p]=a_q$;
\item $\{\pi[X]:X\in A_p\}=A_q$;
\item $f_q(\{\pi(\alpha),\pi(\beta)\})=\pi[f_p(\{\alpha,\beta\})]$ for all $\alpha,\beta\in a_p$.
\end{enumerate}
\end{defi}

\begin{lema}\label{lema11}
Suppose $p, q\in P$ are isomorphic via $\pi:supp(p)\rightarrow supp(q)$. Then they are compatible.
\end{lema}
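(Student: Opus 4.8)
The plan is to write down an explicit common lower bound $r\le p,q$; since the order on $P$ is inverse inclusion, it suffices to produce a single condition $r$ extending both. Imitating the amalgamation in the proof of Fact \ref{fato7}, I would set $a_r=a_p\cup a_q$ and $A_r=A_p\cup A_q$, and define $f_r$ to agree with $f_p$ on $[a_p]^2$, with $f_q$ on $[a_q]^2$, and to take the value $\emptyset$ on every ``cross pair'' $\{\alpha,\beta\}\in[a_r]^2\setminus([a_p]^2\cup[a_q]^2)$. Once $r$ is shown to be a condition, the relations $r\le p$ and $r\le q$ are immediate from the definition of the order, and compatibility follows.

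Two things must be checked. First, $f_r$ must be well defined, i.e. $f_p$ and $f_q$ must agree on $[a_p\cap a_q]^2$. For $\alpha,\beta\in a_p\cap a_q$ we have $\alpha,\beta\in supp(p)\cap supp(q)$, so $\pi$ fixes them and clause (iii) of Definition \ref{defiIsomCond} reads $f_q(\{\alpha,\beta\})=\pi[f_p(\{\alpha,\beta\})]$; it then remains to see that $\pi$ fixes $f_p(\{\alpha,\beta\})$ pointwise, which I would derive from the fact that these values lie below the common points $\alpha,\beta$, together with the coherence property 3) of Proposition \ref{prop1} applied to the models of $A_p$ and $A_q$ that govern $f_p(\{\alpha,\beta\})$. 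Second, and this is the substance, $r$ must satisfy clause d) of the forcing: for all $\{\alpha,\beta\}\in[a_r]^2$ and all $X\in A_r$ with $\alpha,\beta\in X$ one needs $f_r(\{\alpha,\beta\})\subseteq X\cap\min\{\alpha,\beta\}$. I would split this exactly as in Fact \ref{fato7}: when $\{\alpha,\beta\}\in[a_p]^2$ and $X\in A_p$ (respectively $[a_q]^2$, $A_q$) it is clause d) for $p$ (respectively $q$); cross pairs are trivial because there $f_r=\emptyset$; and the genuine cases are $\{\alpha,\beta\}\in[a_p]^2$ with $X\in A_q\setminus A_p$ and its mirror image.

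I expect the mixed cases of clause d) to be the main obstacle, and they are to be resolved by the same mechanism as Case 3 of the Claim in the proof of Fact \ref{fato7}. Given $\{\alpha,\beta\}\in[a_p]^2$ and $X\in A_q$ with $\alpha,\beta\in X$, I would compare $rank(X)$ with the ranks of the models of $A_p$ using the well-foundedness 1) and the coherence 3) of Proposition \ref{prop1}: if $rank(X)$ dominates the relevant model $Y\in A_p$, then $f_p(\{\alpha,\beta\})\subseteq Y\cap\min\{\alpha,\beta\}\subseteq X\cap\min\{\alpha,\beta\}$ by 3); if instead $X$ has small rank, coherence forces $\alpha,\beta$ (and the pertinent initial segment of the support) into $supp(p)\cap supp(q)$, reducing to a case already treated, where $\pi=\mathrm{id}$ lets one transport clause d) for $q$ back to $p$. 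The facts that $\pi$ is order preserving and satisfies $\{\pi[Y]:Y\in A_p\}=A_q$ are what keep the rank comparisons and the identifications of models consistent throughout, and it is precisely the coherent ($\Delta$-system-like) structure of $\mathcal{F}$ that makes the whole amalgamation possible.
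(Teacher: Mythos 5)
Your amalgam is exactly the paper's: $a_r=a_p\cup a_q$, $A_r=A_p\cup A_q$, and $f_r=f_p\cup f_q\cup h$ with $h$ taking the value $\emptyset$ on cross pairs, and your case skeleton for clause d) is also the paper's. The problem is the step you yourself flagged: that $\pi$ fixes $f_p(\{\alpha,\beta\})$ pointwise for $\alpha,\beta\in a_p\cap a_q$, so that $f_p$ and $f_q$ agree on $[a_p\cap a_q]^2$ and $f_r$ is a function. Your appeal to property 3) of Proposition \ref{prop1} works only when some $X\in A_p$ contains both $\alpha$ and $\beta$: then $\pi[X]\in A_q$ contains them as well, clause d) plus coherence places the relevant value set inside $supp(p)\cap supp(q)$, where $\pi$ is the identity, and clause (iii) of Definition \ref{defiIsomCond} yields agreement. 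But when no member of $A_p\cup A_q$ contains both $\alpha$ and $\beta$, clause d) constrains the values only to lie below $\min\{\alpha,\beta\}$ (the intersection over an empty family of models imposes no restriction), and agreement can simply fail. Concretely, take $\gamma_1\neq\gamma_2<\alpha<\beta<\omega_2$ and let $p=(\{\alpha,\beta\},f_p,\emptyset)$ with $f_p(\{\alpha,\beta\})=\{\gamma_1\}$, and $q=(\{\alpha,\beta\},f_q,\emptyset)$ with $f_q(\{\alpha,\beta\})=\{\gamma_2\}$. The map $\pi$ fixing $\alpha,\beta$ and sending $\gamma_1$ to $\gamma_2$ is an order-preserving bijection of $supp(p)=\{\gamma_1,\alpha,\beta\}$ onto $supp(q)=\{\gamma_2,\alpha,\beta\}$, is the identity on the intersection, and satisfies (i)--(iii); yet $p$ and $q$ are incompatible, since any $r\leq p,q$ would need $f_r(\{\alpha,\beta\})$ to equal both $\{\gamma_1\}$ and $\{\gamma_2\}$. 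So this gap cannot be closed by any argument: the lemma is literally true only if the agreement of $f_p$ and $f_q$ on common pairs is built into the notion of isomorphism.

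To be fair, the paper's own proof shares this lacuna: it declares every clause but d) ``automatic'', never checking that $f_p\cup f_q$ is a function, and its Case 1 (``use d) for $q$'') tacitly uses the agreement. The gap is harmless where the lemma is used, since in the $\omega_2$-c.c.\ proof one can refine the $\Delta$-system (using CH) so that the conditions take identical values on pairs from the root, which forces those values into the root; but as stated by you and by the paper, the hypothesis is too weak. Separately, your rank dichotomy in the mixed case of d) is not the paper's argument and is both unnecessary and incomplete as stated: your ``Option A'' presupposes a model $Y\in A_p$ containing both $\alpha$ and $\beta$, which need not exist, while the paper argues rank-free --- if $\alpha,\beta\in a_p$ and $\alpha,\beta\in X\in A_q$, then $\alpha,\beta\in\bigcup A_q\subseteq supp(q)$ and $\alpha,\beta\in supp(p)$, so $\pi$ fixes them, hence $\alpha=\pi(\alpha)\in\pi[a_p]=a_q$ and likewise $\beta\in a_q$, and then clause d) for $q$ gives $f_q(\{\alpha,\beta\})\subseteq X\cap\min\{\alpha,\beta\}$ directly. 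Note that this, too, transfers to $f_r$ only via the unproved agreement, so everything in the lemma really hinges on that one missing hypothesis.
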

\begin{proof}
Define the common
extension $r$ of $p$ and $q$  as follows: $a_r=a_p\cup a_q$, $f_r=f_p\cup f_q\cup h$,
$A_r=A_p\cup A_q$, where $h(\{\alpha,\beta\})=\emptyset$ for $\{\alpha,\beta\}\in
[a_p\cup a_q]^2-([a_p]^2\cup[a_q]^2)$.
 The only non-automatic condition of the definition of $P$ which needs to be checked 
is d). 

\vspace{0.3cm}

\noindent \textit{Case 1.} $\alpha,\beta\in a_r$.

If $X\in A_r$,  we are trivially done.
If $X\in A_q$ and $\alpha,\beta\in X$, then $\alpha,\beta\in supp(p)\cap supp(q)$,
hence $\alpha,\beta\in a_p\cap a_q$ and hence again use d) for $q$.

\vspace{0.3cm}

\noindent \textit{Case 2.} $\alpha,\beta\in a_q$.

Similar to the previous case.

\vspace{0.3cm}

\noindent \textit{Case 3.} $\alpha\in a_r\setminus a_q$, $\beta\in a_q \setminus a_r$.

In this case $h$ is empty.
\end{proof}
\begin{fato}
Assuming CH the
forcing $P$ is $\omega_2$-c.c.
Thus by Fact \ref{fato7}, Lemma \ref{lema6} and Fact \ref{fato3}, $P$ preserves cardinals.
\end{fato}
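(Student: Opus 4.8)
The plan is to show that $P$ satisfies the $\omega_2$-chain condition by a standard $\Delta$-system plus isomorphism argument, leveraging Lemma \ref{lema11}. Suppose toward a contradiction that $\{p_\xi : \xi < \omega_2\}$ is an antichain in $P$. First I would apply the $\Delta$-system lemma to the supports: the family $\{supp(p_\xi) : \xi < \omega_2\}$ consists of finite subsets of $\omega_2$ (each $supp(p_\xi)$ is finite since $a_{p_\xi}$ is finite, $f_{p_\xi}$ has finite range of finite sets, and $A_{p_\xi}$ is a finite set of countable sets — note, however, that $\bigcup A_{p_\xi}$ is countable, not finite, so a slight care is needed here). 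This is the first subtlety: the support need not be finite because each $X \in A_p$ is countable. Thus I would instead apply the $\Delta$-system lemma to the finite sets $a_{p_\xi} \cup \{\text{codes or representatives}\}$; concretely, one fixes a finite set of ordinals encoding each condition, namely $a_{p_\xi}$ together with $\bigcup f_{p_\xi}[[a_{p_\xi}]^2]$, and the finite index set used to enumerate the members of $A_{p_\xi}$ relative to their ranks.

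The heart of the argument is to reduce to finitely many isomorphism types so that two conditions become isomorphic via an order-preserving bijection, after which Lemma \ref{lema11} yields compatibility, contradicting the antichain assumption. After the $\Delta$-system reduction on $a_{p_\xi} \cup \bigcup f_{p_\xi}[[a_{p_\xi}]^2]$, obtaining a root $R$ and refining to an uncountable (indeed size $\omega_2$) subfamily, I would thin out further so that all conditions have the same size $|a_{p_\xi}| = m$ and $|A_{p_\xi}| = \ell$, the same pattern of the function $f_{p_\xi}$ under the canonical order-isomorphism of $a_{p_\xi}$ onto $\{0,\dots,m-1\}$, the same relative position of the root $R$, and the same ``type'' of the side conditions $A_{p_\xi}$ with respect to the ordinals in $a_{p_\xi}$ (i.e. which elements of $a_{p_\xi}$ lie in which of the $\ell$ listed members of $A_{p_\xi}$, and the relative ranks of those members). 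Since $2^{\aleph_0} = \aleph_1 < \aleph_2$ under CH, there are at most $\aleph_1$ possible isomorphism types of the countable sets forming the side conditions up to the relevant data, so a further thinning leaves an uncountable — in fact size $\aleph_2$ — subfamily of pairwise isomorphic conditions.

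The main obstacle I anticipate is verifying that the order-preserving bijection $\pi : supp(p_\xi) \to supp(p_{\eta})$ produced by the thinning genuinely satisfies all three clauses of Definition \ref{defiIsomCond}, and in particular that $\pi$ is \emph{constant on} $supp(p_\xi) \cap supp(p_\eta)$ and maps side conditions to side conditions, i.e. $\{\pi[X] : X \in A_{p_\xi}\} = A_{p_\eta}$. Ensuring $\pi$ is constant on the intersection of supports is exactly where CH and property 3) of Proposition \ref{prop1} are used: property 3) forces the countable side conditions of smaller rank to cohere below any fixed ordinal, so that once the ranks and the initial segments agree the map $\pi$ restricted to each $X \in A_{p_\xi}$ agrees with the identity on the common part. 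I would arrange the thinning so that for conditions of the same rank-type the members of the side conditions agree on a sufficiently large initial segment (again by 3)), forcing $\pi$ to be the identity wherever the supports overlap.

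Once two isomorphic conditions $p_\xi, p_\eta$ are produced, Lemma \ref{lema11} gives a common extension, contradicting that $\{p_\xi : \xi < \omega_2\}$ is an antichain; hence $P$ is $\omega_2$-c.c. The preservation of cardinals then follows immediately: $P$ is simply $\mathcal{F}$-proper by Fact \ref{fato7}, hence $\mathcal{F}$-proper by Lemma \ref{lema6}, hence preserves $\omega_1$ by Fact \ref{fato3}; and the $\omega_2$-c.c. preserves all cardinals $\geq \omega_2$, so $P$ preserves cardinals as claimed.
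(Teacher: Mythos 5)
Your instinct that the countability of the supports is the crux is right, but your fix runs the wrong way, and that is exactly where the proof breaks. The paper's argument (and the reason CH is the hypothesis at all) applies the $\Delta$-system lemma \emph{directly to the countable sets} $supp(p_\xi)$: under CH one has $\aleph_1^{\aleph_0}=\aleph_1<\aleph_2$, so any family of $\aleph_2$ countable sets has a $\Delta$-subsystem of size $\aleph_2$ with a countable root $R$. Then $supp(p_\xi)\cap supp(p_\eta)=R$ for every pair, and since (again by CH) there are at most $\aleph_1$ isomorphism types of the countable structures $(supp(p_\xi),<,a_{p_\xi},f_{p_\xi},A_{p_\xi})$ with the elements of $R$ named as constants, a further thinning yields $\aleph_2$ conditions any two of which are isomorphic in the sense of Definition \ref{defiIsomCond}; Lemma \ref{lema11} then contradicts the antichain assumption. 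By retreating to a finite $\Delta$-system on $a_{p_\xi}\cup\bigcup f_{p_\xi}[[a_{p_\xi}]^2]$ you forfeit the one thing the argument needs: control of the full intersection $supp(p_\xi)\cap supp(p_\eta)$, which contains $\bigcup A_{p_\xi}\cap\bigcup A_{p_\eta}$ and the mixed intersections, not just the finite root.

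The concrete gap is your claim that property 3) of Proposition \ref{prop1} forces $\pi$ ``to be the identity wherever the supports overlap.'' Coherence gives only this: if $X\in A_{p_\xi}$ and $Y=\pi[X]\in A_{p_\eta}$ have the same rank, then $X\cap Y$ is a common initial segment of both, so the order isomorphism $\pi|_X$ is automatically the identity on $X\cap Y$. It does nothing for two other kinds of points of $supp(p_\xi)\cap supp(p_\eta)$: (a) a point $z\in a_{p_\xi}\setminus R$ that happens to lie in some member of $A_{p_\eta}$ --- then any $\pi$ as in Definition \ref{defiIsomCond} must send $z$ into $a_{p_\eta}\setminus R$, which is disjoint from $a_{p_\xi}\setminus R$, so $\pi(z)\neq z$ and the two conditions are simply \emph{not} isomorphic, no matter how the side conditions cohere; (b) cross intersections $X\cap Y'$ with $Y'\in A_{p_\eta}$, $Y'\neq\pi[X]$, say of higher rank, where nothing prevents $z\in X\cap Y'$ with $z\notin\pi[X]$, again forcing $\pi(z)\neq z$. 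These failure modes cannot be ruled out by aligning ranks and initial segments, because the offending points are determined by the \emph{other} condition of the pair. One can in fact repair your route --- for instance, configuration (a) can be avoided by a set-mapping/free-set argument, since for fixed $\eta$ only countably many $\xi$ satisfy $(a_{p_\xi}\setminus R)\cap\bigcup A_{p_\eta}\neq\emptyset$ (the sets $a_{p_\xi}\setminus R$ are pairwise disjoint and $\bigcup A_{p_\eta}$ is countable) --- but that is genuine additional work absent from your proof, and it is precisely the work that the countable-$\Delta$-system route makes unnecessary. Your final paragraph on preservation of cardinals is correct and matches the paper.
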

\begin{proof}
By the previous lemma the proof is a standard application of
the $\Delta$-system lemma to the sequence of supports $\{supp({p_\xi}):\xi<\omega_2\}$
of some conditions $p_\xi\in P$ under our cardinal arithmetic assumption. 
\end{proof}

\begin{teo}\label{teoPiotrPantanos}
In $V^P$ there is a function $f:[\omega_2]^2\rightarrow[\omega_2]^{<\omega}$
with the strong property $\Delta$.
\end{teo}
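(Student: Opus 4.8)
The plan is to define $f:[\omega_2]^2\to[\omega_2]^{<\omega}$ as the generic object added by the forcing $P$, namely $f=\bigcup\{f_p:p\in G\}$ where $G$ is a $P$-generic filter over $V$. First I would verify that $f$ is well-defined and total: the usual density argument (for each $\{\alpha,\beta\}\in[\omega_2]^2$ the set of conditions $p$ with $\alpha,\beta\in a_p$ is dense, which uses the stationarity/directedness of $\mathcal{F}$ to find a side condition containing both $\alpha$ and $\beta$, then sets $f_p(\{\alpha,\beta\})=\emptyset$) shows $\mathrm{dom}(f)=[\omega_2]^2$, and compatibility of conditions in $G$ makes $f$ a function. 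Clause d) of the definition of $P$ immediately gives $f(\{\xi,\eta\})\subseteq\min\{\xi,\eta\}$, so the first requirement of the strong property $\Delta$ holds.

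The heart of the argument is verifying the combinatorial requirements 1)--3) for an arbitrary uncountable $\Delta$-system $\mathcal{A}$ of finite subsets of $\omega_2$ in $V^P$. The plan is a standard reflection-and-amalgamation argument. Since $P$ preserves cardinals and CH holds, I would first argue that it suffices to handle a $\Delta$-system $\mathcal{A}=\{a_\alpha:\alpha<\omega_1\}$ named in the ground model; by a $\Delta$-system/thinning argument (working below a condition that decides enough of the structure) I can find, for each $\alpha$, a condition $p_\alpha\in G$ with $a_\alpha\subseteq a_{p_\alpha}$, and refine so that the $p_\alpha$ are pairwise isomorphic via order-preserving bijections that are the identity on the common part and move each element upward, with the $\mathrm{supp}(p_\alpha)$ forming a $\Delta$-system. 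This is where the side conditions do the real work: using an elementary submodel $M\prec H(\omega_3)$ with $M\cap\omega_2=X\in\mathcal{F}$ and containing the relevant data, together with properties 3)--6) of Proposition \ref{prop1}, I would locate two indices $\alpha<\beta$ such that $X=M\cap\omega_2$ separates $a_{p_\alpha}$ from the part of $a_{p_\beta}$ lying outside $M$, and such that the morass-coding guarantees $f(\{\xi,\eta\})$ captures the required initial segments.

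Concretely, the $\Delta$-properties 1)--3) should fall out of the defining clause d) applied with the side condition $X\in\mathcal{F}$ of the right rank. For requirement 1), $a\cap\min\{\xi,\eta\}\subseteq f(\{\xi,\eta\})$: here $\xi\in a\setminus b$ and $\eta\in b\setminus a$, and one would arrange (via genericity, extending $p_\alpha\cup p_\beta$ amalgamated as in Lemma \ref{lema11}) that the condition deciding $f(\{\xi,\eta\})$ has a side-condition set $X\in A$ with $a\cap\min\{\xi,\eta\}\subseteq X\cap\min\{\xi,\eta\}$, and then a density argument forces $f(\{\xi,\eta\})$ to contain that finite set. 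For requirements 2) and 3), the monotonicity $f(\{\tau,\eta\})\subseteq f(\{\xi,\eta\})$ when $\tau<\xi$, the crucial tool is property 3) of Proposition \ref{prop1}: if $X$ is the side condition witnessing values of $f$ on both pairs and $\tau,\xi$ sit correctly relative to $\mathrm{rank}$, the inclusion $X\cap\alpha\subseteq Y\cap\alpha$ for ranks comparison yields the nesting. I would make these three requirements simultaneously satisfiable by a single generic amalgamation, using the isomorphism $\pi=e$ between $p_\alpha$ and $p_\beta$ as the order-preserving bijection $e:a\to b$ demanded by Definition \ref{strongDeltaProperty}.

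The main obstacle I anticipate is the bookkeeping that ties together the order-preserving bijection $e$, the ranks of the side conditions, and the precise genericity requirements guaranteeing that $f$ takes the prescribed values on the relevant pairs $\{\xi,\eta\}$, $\{\tau,\eta\}$, $\{\tau,\xi\}$ all at once. In particular, ensuring that one can \emph{simultaneously} control $f$ on the whole finite array of pairs arising from $a\cup b$ — rather than pair by pair — while keeping the amalgamated condition legal under clause d), is delicate: the finite subsets $f(\{\xi,\eta\})$ must be large enough to absorb the required initial segments yet remain below $\min\{\xi,\eta\}$ and inside the chosen side conditions. Managing this through the correct choice of elementary submodel $M$ and the morass structure of $\mathcal{F}$, invoking properties 5) and 6) to pull the lower-rank side conditions into $M$, is the technical core of the proof.
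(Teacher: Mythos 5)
Your proposal starts the same way as the paper (take $f=\bigcup\{f_p:p\in G\}$, use an elementary submodel $M$ with $M\cap\omega_2\in\mathcal{F}$, isomorphic conditions, amalgamation), but the mechanism you propose for getting clauses 1)--3) cannot work, and the point where it fails is exactly where the paper has to introduce a new idea. In $P$ the order is \emph{function extension}: $p\leq q$ requires $f_p\supseteq f_q$, so once any condition in $G$ has the pair $\{\xi,\eta\}$ in its domain, the value $f(\{\xi,\eta\})$ is frozen forever. Your plan is to amalgamate $p_\alpha$ and $p_\beta$ ``as in Lemma \ref{lema11}'' and \emph{then} run a density argument forcing $f(\{\xi,\eta\})$ to absorb $a\cap\min\{\xi,\eta\}$; but the amalgamation of Lemma \ref{lema11} assigns $h(\{\xi,\eta\})=\emptyset$ to every cross pair, and after that no extension can enlarge this value, so the density argument is vacuous. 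A second directional error compounds this: clause d) of the definition of $P$, applied to a side condition $X$ with $\xi,\eta\in X$, only bounds $f(\{\xi,\eta\})$ \emph{from above} (it must be contained in $X\cap\min\{\xi,\eta\}$); no choice of side conditions can ever force ordinals \emph{into} a value of $f$. Side conditions restrict, they do not prescribe.

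What the paper does instead is replace the trivial amalgamation by a loaded one: for cross pairs $\xi\in a_s\setminus a_q$, $\eta\in a_q\setminus a_s$ it sets
$$h(\{\xi,\eta\})=[A\cup B\cup C]\cap D,$$
where $A=a\cap\min\{\xi,\eta\}$, $B$ and $C$ are the unions of the already-decided values $f_s(\{\tau,\xi\})$, $f_q(\{\tau,\eta\})$ for $\tau\in a\cap b$ demanded by clauses 2) and 3), and $D=\min\{\xi,\eta\}\cap\bigcap\{X\in A_q:\xi,\eta\in X,\ rank(X)\geq\delta\}$ is exactly what clause d) permits. The required sets are written into $f$ \emph{at the moment the pair enters the domain}, and the real work of the proof is showing that $A,B,C\subseteq D$ --- i.e.\ that the cut by $D$ removes nothing --- which is where property 3) of Proposition \ref{prop1} (rank comparison in the morass), the choice $X_0=M\cap\omega_2$ as a side condition of $p_0$, and the asymmetric selection of the two members ($a\subseteq M$ via $s\in M$, while $b\setminus M\neq\emptyset$) are used. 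Note also that the two members are not drawn from a ground-model $\Delta$-system: your reduction ``it suffices to handle a $\Delta$-system named in the ground model'' is unjustified, since distinct members of an uncountable $\Delta$-system in $V^P$ are decided by different, possibly incompatible conditions and $P$ has no chain condition strong enough to pull the system into $V$. The paper instead works with names $\dot a^\alpha$ and, below an arbitrary $p$, uses the well-reflecting formula $\phi$ inside $M$ to find indices $\alpha_1,\alpha_2$ and conditions $q$, $s\in M$ deciding $\dot a^{\alpha_1}=\check b$, $\dot a^{\alpha_2}=\check a$, to which the loaded amalgamation is then applied.
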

\begin{proof}
 Clearly, we claim that
$f=\bigcup\{f_p:p\in G\}$ defines such a function,
where $G$ is a $P$-generic over $V$. Let ${\dot f}$ be a name for it.
 
Fix a set $A=\{\dot{a}^\alpha: \alpha<\omega_1\}$ 
of $P$-names
for elements of
 an uncountable  $\Delta$-system of  $n$-tuples $\dot{a}=\{{\dot a_i}: i<n\}$
of elements of $\omega_2$ for which there are bijections $e$ as  in Definition \ref{strongDeltaProperty} 
(any uncountable $\Delta$-system  has an uncountable such a subsystem).
Fix a condition $p\in P$.
  
Take a model $M\prec H(\omega_3)$ such that $M\cap\omega_2=X_0\in\mathcal{F}$ and
$p\in P\cap M$; $\mathcal{F}\in M$ and $\{\dot{a}^\alpha:\alpha<\omega_1\}\in M$. We
will show that there are  $\alpha_1<\alpha_2<\omega_1$ and $r\leq p$
such that $r$ forces 1), 2), 3) of Definition \ref{strongDeltaProperty}
 for $ \dot{a}^{\alpha_1}$ and $\dot{a}^{\alpha_2}$.

First take a condition $p_0\leq p $ as in  Fact \ref{fato7}, i.e.,
$a_{p_0}=a_p$, $f_{p_0}=f_p$, $A_{p_0}=A_{p}\cup X_0$.
Take $q\leq p_0$ and $\alpha_1\in\omega_1$ such that
there is $b$ such that $b\setminus M\not=\emptyset$,
$q\forces\dot{a}^{\alpha_1}={\check b}$ and $b\subseteq a_q$.
This can be done as $\{\dot{a}^\alpha:\alpha<\omega_1\}$
is a sequence of names for an uncountable  $\Delta$-system of sets and $|M|=\omega$.
Proceed as in the proof of Fact \ref{fato7}, i.e., choose $Z$ and $\phi$ as in Fact \ref{fato7}.
So, we have $\phi(q,Z,a_{q|M},q|M, P )$ in $H(\omega_3)$ and so by
the elementarity of $M$, we can find an $s$ and $\alpha_2$ such that $\phi(s,Z,a_{q|M},q|M, P)$ 
holds in $M$ and moreover there is $a$ such that $a\setminus(b\cap M)\in
 [M\setminus Z]^{<\omega}$ and such that
$s\forces\dot{a}^{\alpha_2}={\check a}$ and $a\subseteq a_s$.
Now we will obtain another amalgamation $r$ of $s$ and $q$
which will force 1), 2) and 3) of Definition \ref{strongDeltaProperty}. 
Let $a_r=a_s\cup a_q$,
 $f_r=f_s\cup f_q\cup h$. For  $\xi\in a_s\setminus a_q$
and $\eta\in a_q\setminus a_s$:
$$h(\{\xi,\eta\})=[A\cup B\cup C]\cap D \leqno **)$$
where
$$A=a\cap\min\{\xi,\eta\}$$
$$B=  \bigcup\{f_s(\{\tau,\xi\}):
\tau\in a\cap b, \tau<\eta\}$$
$$C=\bigcup\{f_q(\{\tau,\eta\}):
\tau\in a\cap b, \tau<\xi\}$$
$$D=\min\{\xi,\eta\}\cap
 \bigcap\{X\in A_q: \xi,\eta\in X, rank(X)\geq\delta\}$$

First let us check that $r$ is a common extension of $q$ and $s$. The 
proof also follows the cases as in the Claim in the proof of Fact \ref{fato7}. All are checked in the same manner
except for Case 5 where one may assume that
$X\in A_q$ as $\beta\not\in M$. This time the inclusion in the set D guarantees that d) holds in Case 5.

Now we will check 1), 2) and 3) of Definition \ref{strongDeltaProperty} for $a,b$ as above
and $f_r$. This will be enough since $r\forces\dot{a}^{\alpha_1}={\check b}, 
\dot{a}^{\alpha_2}={\check a}$  and $r\forces f_r\subseteq {\dot f}$. 
Suppose $\xi\in a\setminus b$ and $\eta\in b\setminus a$.
By the form of the definition of $f_r(\{\xi,\eta\})=h(\{\xi,\eta\})$
it will be enough to prove that the sets $A$, $B$ and $C$ are actually included in $\min\{\xi,\eta\}\cap X$
for  any $X\in A_q$ such that $rank(X)\geq \delta$ and $\xi, \eta\in X$.
So, let $X\in A_q$ be any such element that $rank(X)\geq \delta$ and $\xi, \eta\in X$.

\vspace{0.3cm}

For 1) of Definition \ref{strongDeltaProperty}, note that since $X_0=M\cap \omega_2$ and $rank(X_0)=\delta$ we have that
$M\cap\min\{\xi,\eta\}$ is included in $X$ by 3) of Proposition \ref{prop1}. Hence, as $a\subseteq M$, we have
$a\cap\min\{\xi,\eta\}\subseteq \min\{\xi,\eta\}\cap X$, that is we obtain 1).

\vspace{0.3cm}

To get 2) of Definition \ref{strongDeltaProperty} assume that $\tau \in a \cap b$ and $\tau<\xi$, hence $\min\{\tau,\eta\}\leq \min\{\xi,\eta\}$.
Note again, by 3) of Proposition \ref{prop1}, that $M\cap\xi\subseteq X\cap\xi$ which implies in this case that
$\tau\in X$.
Hence, since $\tau,\eta\in a_q$, by d) of the definition of the forcing,
we have that $f_q(\{\tau,\eta\})\subseteq X\cap\min\{\tau,\eta\} \subseteq X\cap\min\{\xi,\eta\}$, so we obtain 2).

\vspace{0.3cm}

To get 3) of Definition \ref{strongDeltaProperty} assume that $\tau \in a \cap b$ and $\tau<\eta$,
hence $min\{\tau,\xi\}\leq\min\{\xi,\eta\}$. We have that $\tau,\xi\in M\cap\xi$,
and again $M\cap\xi\subseteq X$. 
Hence, since $\xi,\tau\in a_s$, by d) of the definition of the forcing,
and the fact that $s\in M$, we have that 
$f_s(\{\tau,\xi\})\cap\min\{\tau,\xi\}\subseteq X\cap\min\{\tau,\xi\} \subseteq X\cap\min\{\xi,\eta\}$, so we obtain 3).
\end{proof}

\begin{remark}
For any $k\leq\omega$ and any uncountable $\Delta$-system
one can have $k$ sets satisfying 1), 2) and 3) of Definition \ref{strongDeltaProperty}. 
This follows from the Dushnik-Miller theorem see \cite{Jech} Theorem 9.7.
\end{remark}

\subsection{CH and the strong property $\Delta$.}

In this section we prove that CH implies that
there is no function $f$ such as in the previous section, even if we allow
$f$ to take countable sets as values. This also proves that
the strong property $\Delta$ cannot be obtained as in Baumgartner and Shelah \cite{BaumgartnerShelah},
that is, by a forcing which preserves CH.

\begin{prop}
(CH) There is no $f:[\omega_2]^2\rightarrow[\omega_2]^\omega$ such that
for every $\Delta$-system $\mathcal{A}$ of finite subsets of $\omega_2$
of cardinality $\aleph_1$, there exist distinct $a,b\in\mathcal{A}$ such that
$$\forall\xi\in a\setminus b\forall \eta\in b\setminus a\ \ \ a\cap\xi\cap\eta\subseteq f(\{\xi,\eta\}).\leqno *)$$
\end{prop}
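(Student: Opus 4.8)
The plan is to argue by contradiction, assuming that $f$ does have the stated property, and to first reduce the task to a purely combinatorial \emph{capturing} statement. Observe that to \emph{refute} the property it suffices to produce one $\Delta$-system $\mathcal{A}$ of size $\aleph_1$ in which no ordered pair $(a,b)$ satisfies $*)$. I would use the simplest possible systems: fix a finite $r\subseteq\omega_2$ and an uncountable $S\subseteq(\max r,\omega_2)$, and set $\mathcal{A}=\{\,r\cup\{\mu\}:\mu\in S\,\}$, a $\Delta$-system with root $r$ and singleton tops. For distinct $a=r\cup\{\mu\}$ and $b=r\cup\{\nu\}$ the only choices are $\xi=\mu,\eta=\nu$, and since every point of $r$ lies below both while $\mu,\nu$ do not, one computes $a\cap\xi\cap\eta=r$. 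Hence the inner condition of $*)$ for this ordered pair is exactly $r\subseteq f(\{\mu,\nu\})$, and it reduces to the same thing in the other order. Thus $\mathcal{A}$ witnesses the failure of $*)$ precisely when $r\not\subseteq f(\{\mu,\nu\})$ for all distinct $\mu,\nu\in S$. So the whole proposition reduces to: \emph{under CH there is a finite $r$ and an uncountable $S$ above it such that no pair from $S$ ``captures'' $r$.}

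Next I would assume this fails for every finite $r$, i.e. that for each finite $r$ every uncountable $S\subseteq(\max r,\omega_2)$ contains a pair $\{\mu,\nu\}$ with $r\subseteq f(\{\mu,\nu\})$, and try to derive a contradiction. The point of CH is twofold. First, $\aleph_2=(2^{\aleph_0})^+$, so the Erd\H{o}s--Rado relation $\aleph_2\to(\aleph_1)^2_2$ holds; applied to the $2$-colouring of $[(\max r,\omega_2)]^2$ that marks a pair iff it captures $r$, the assumed absence of an uncountable $0$-homogeneous set forces an uncountable $1$-homogeneous set, i.e. an uncountable clique $C_r\subseteq(\max r,\omega_2)$ all of whose pairs capture $r$. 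Second, $|[\omega_1]^{\le\omega}|=\aleph_1$, and each value $f(\{\mu,\nu\})$ is countable, so a fixed pair can capture at most countably many finite roots $r$; equivalently each pair lies in $[C_r]^2$ for only countably many $r$. The aim would then be to feed uncountably many finite roots $r\subseteq\omega_1$ into these cliques, reflect the cliques into a small set via an elementary submodel $M\prec H(\omega_3)$ with $\omega_1\subseteq M$ and $M\cap\omega_2$ an ordinal, and locate a single pair $\{\mu,\nu\}$ lying in $[C_r]^2$ for uncountably many $r$. Since distinct finite subsets of $\omega_1$ have unbounded union, this would force $f(\{\mu,\nu\})$ to contain an uncountable set of ordinals, contradicting the countability of $f$-values.

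The hard part will be exactly this last concentration step. The difficulty is a genuine cardinal-arithmetic tension at the gap between $\aleph_1$ and $\aleph_2$: reflecting the cliques $C_r$ into a countable set forces only countably many admissible roots $r$, whereas having uncountably many roots forces the reflected clique-witnesses to range over a set of size $\aleph_1$, and then the count $\aleph_1$ roots against $\aleph_1$ pairs (each used countably often) is too tight to produce a repetition. Overcoming this is where CH must be used more delicately than by mere counting, presumably by a reflection/fusion argument that keeps the capturing witnesses confined to a fixed countable set as the roots grow, or by exploiting the full strength of $*)$ on \emph{rich} $\Delta$-systems (tops of size $>1$), where the analogous ``descending'' ordered pair still only sees the root and the ``ascending'' one forces $f$ of a pair of large elements to contain almost the entire lower block. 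I would therefore expect the crux of the write-up to be the careful engineering of the reflecting model and of the family of roots so that the good pair guaranteed by $*)$ is pinned to a single high pair $\{\xi,\eta\}$ whose $f$-value thereby swells to size $\aleph_1$.
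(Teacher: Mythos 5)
Your very first reduction is the fatal step, and the obstruction is not technical but fundamental. For your singleton-top systems $\mathcal{A}=\{r\cup\{\mu\}:\mu\in S\}$ one has $a\cap\xi\cap\eta=r=a\cap b$, so the failure of $*)$ on such a family is exactly the failure of clause 1) of the ordinary Baumgartner--Shelah property $\Delta$, namely $a\cap b\cap\min\{\xi,\eta\}\subseteq f(\{\xi,\eta\})$, on that family. But a function with the full ordinary property $\Delta$ is added by a forcing that preserves CH --- this is precisely what the paper points out when motivating this very proposition (``forcings preserving CH, as in Baumgartner--Shelah, cannot serve for this purpose'') --- and for such an $f$, clause 1) applied to your family produces, for every finite $r$ and every uncountable $S\subseteq(\max r,\omega_2)$, a pair $\{\mu,\nu\}\in[S]^2$ with $r\subseteq f(\{\mu,\nu\})$. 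Hence the combinatorial statement you reduce to (``under CH there exist a finite $r$ and an uncountable $S$ none of whose pairs captures $r$'') is false in the Baumgartner--Shelah model of CH, so it cannot be proved from CH, and no refinement of your Erd\H{o}s--Rado/counting/reflection scheme can close the argument. Your own diagnosis of the ``concentration step'' as the hard part is therefore understated: within your framework that step is impossible. The entire content of the proposition lies in the distinction your reduction erases, namely that $a\cap\xi\cap\eta$ may contain elements of $a\setminus b$ strictly below both $\xi$ and $\eta$, which no singleton-top system has.

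The paper's proof is built exactly on such elements (the direction you gesture at in your final sentence but do not develop). Using CH, it fixes $M\prec H(\omega_3)$ with $|M|=\omega_1$, $\omega_1\subseteq M$, $M$ closed under countable subsets, and $\gamma=\sup(M\cap\omega_2)$ of uncountable cofinality. A recursion of length $\omega_1$ inside $M$ produces $\alpha_\xi<\beta_\xi<\alpha_\eta$ (for $\xi<\eta$) such that $f(\{\zeta,\beta_\eta\})\cap A_\eta=f(\{\zeta,\gamma\})\cap A_\eta$ for all $\zeta$ in the set $A_\eta$ of previously chosen ordinals, while $\alpha_\eta\notin f(\{\beta_\eta,\gamma\})$; separately one picks increasing countable ordinals $\gamma_\eta$ avoiding $\bigcup\{f(\{\beta_\xi,\beta_\eta\}):\xi<\eta\}$. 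The witnessing family is the $\Delta$-system of pairwise disjoint triples $\{\{\gamma_\xi,\alpha_\xi,\beta_\xi\}:\xi<\omega_1\}$: for $\xi<\eta$ the agreement clause transfers $\alpha_\xi\notin f(\{\beta_\xi,\gamma\})$ to $\alpha_\xi\notin f(\{\beta_\xi,\beta_\eta\})$, so $\alpha_\xi$ witnesses the failure of $*)$ for the ordered pair $(a_\xi,a_\eta)$, while $\gamma_\eta$ witnesses it for $(a_\eta,a_\xi)$. Both witnesses are elements of the relevant set lying below both ``tops'' $\beta_\xi,\beta_\eta$ and outside the (empty) root --- exactly the feature your systems lack. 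Any salvage of your write-up would have to redesign the $\Delta$-system along these lines, at which point the Erd\H{o}s--Rado step becomes unnecessary.
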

\begin{proof}
Suppose that $f:[\omega_2]^2\rightarrow[\omega_2]^\omega$.
For an $A\subseteq\omega_2$ and $\xi\in\omega_2\setminus A$ define
$$f_{A,\xi}:A\rightarrow[A]^\omega,\ \ \ f_{A,\xi}(\eta)=f(\{\eta,\xi\})\cap A\ \ \ \forall\eta\in A.$$
Let $M\prec H(\omega_3)$ be closed under its countable subsets (here we use CH)
$|M|=\omega_1$, $\omega_1\subseteq M$; $\omega_1,\omega_2, f\in M$ and such that 
$\sup(M\cap\omega_2)=\gamma$ has an uncountable cofinality.

\vspace{0.3cm}

By recursion construct a sequence $(\alpha_\xi,\beta_\xi)_{\xi<\omega_1}$ which satisfies:
\begin{enumerate}[1)]
\item $\omega_1<\alpha_\xi,\beta_\xi\in M\cap\gamma$;
\item $\alpha_\xi<\beta_\xi<\alpha_\eta$ for all $\xi<\eta$;
\item $f_{A_\eta,\beta_\eta}=f_{A_\eta,\gamma}$ where $A_\eta=\{\alpha_\xi,\beta_\xi:\xi<\eta\}$;
\item $\alpha_\eta\not\in f(\{\beta_\eta,\gamma\})$.
\end{enumerate}

To justify that this construction can be carried out assume that we have
$A_\eta$ satisfying  1)-4) and let us show how to obtain
$\alpha_\eta,\beta_\eta$. As $A_\eta\subseteq M$ and  $M$ is closed under its countable sets
we have $A_\eta\in M$. Also $f_{A_\eta,\gamma}\in M$ as $M$ is closed under countable sets.
Hence, by the elementarity there is $\beta_\eta\in M\setminus\sup(A_\eta)$ such that
$f_{A_\eta,\beta_\eta}=f_{A_\eta,\gamma}$  and  $cf(\beta_\eta)=\omega_1$.
Now $f(\{\beta_\eta,\gamma\})\cap M$ is in $M$ again, so using the fact that
$cf(\beta_\eta)=\omega_1$ we can find $\alpha_\eta\in M$ satisfying 
$A_\eta<\alpha_\eta<\beta_\eta$ and $\alpha_\eta\not\in f(\{\beta_\eta,\gamma\})$
which completes the construction.

\vspace{0.3cm}

Now define $\gamma_\eta<\omega_1$ such that for $\xi<\eta<\omega_1$ we have
$$\gamma_\xi<\gamma_\eta\not\in\bigcup\{f(\{\beta_\xi,\beta_\eta\}):\xi<\eta\}.$$
Finally define  $\mathcal{A}=\{\{\gamma_\xi,\alpha_\xi,\beta_\xi\}:\xi<\omega_1\}$.
Suppose  $\xi<\eta$. Note that, as by 4),
 $\alpha_\xi\not\in f(\{\beta_\xi,\gamma\})$ and by 3), $f(\{\beta_\xi,\gamma\})=f(\{\beta_\xi,\beta_\eta\})$, 
we have 
$$\alpha_\xi\in (\beta_\xi\cap\beta_\eta)\setminus f(\{\beta_\xi,\beta_\eta\}).$$
But on the other hand, by the definition of $\gamma_\eta$, we have
$$\gamma_\eta\in (\beta_\xi\cap\beta_\eta)\setminus f(\{\beta_\xi,\beta_\eta\}),$$
which shows that the inclusion *) of the proposition holds for no $a,b\in\mathcal{A}$.
\end{proof}


\bibliographystyle{amsplain}

\end{document}